\def \trans{^{\scriptscriptstyle{\intercal}}}
\def \ep{\hbox{ }\hfill$\Box$}
\DeclareMathOperator{\Tr}{Tr}
\DeclareMathOperator{\R}{\mathbb{R}}
\def\Lc{{\cal L}}
\def \E{\mathbb{E}}
\def \F{\mathbb{F}}
\def \M{\mathbb{M}}
\def\1{{\bf 1}}
\def \N{\mathbb{N}}
\def\Fc{{\cal F}}
\def\Uc{{\cal U}}
\def\Wc{{\cal W}}
\def\Xc{{\cal X}}
\def\Zc{{\cal Z}}
\def\Nc{{\cal N}}
\def\argmin_#1{\underset{#1}{\mathrm{argmin\, }}}
\newcommand{\di}{\mathrm{d}}
\def \Sum{\displaystyle\sum}
\def \Sum{\displaystyle\sum}
\def\Zc{{\cal Z}}
\def\Gc{{\cal G}}
\def\Fc{{\cal F}}
\def\Uc{{\cal U}}
\def\Lc{{\cal L}}
\def\Sc{{\cal S}}
\def\Xc{{\cal X}}
\def\Nc{{\cal N}}
\def\Wc{{\cal W}}
\def\Jc{{\cal J}}
\def\eps{{\varepsilon}}
\def\bZc{\boldsymbol{\Zc}}
\newcommand{\Pro}{\mathbb{P}}
\def \J{\mathbb{J}}
\def \M{\mathbb{M}}
\def\Lc{{\cal L}}
\def \trans{^{\scriptscriptstyle{\intercal}}}
\def \ep{\hbox{ }\hfill$\Box$}
\newtheorem{Theorem}{Theorem}[section]
\newtheorem{Proposition}{Proposition}[section]
\newtheorem{Assumption}{Assumption}[section]
\newtheorem{Lemma}{Lemma}[section]
\newtheorem{Remark}{Remark}[section]
\numberwithin{equation}{section}
\title{Approximation error analysis of some deep backward schemes for nonlinear PDEs
\thanks{This work is supported by  FiME, Laboratoire de Finance des March\'es de l'Energie, and the ''Finance and Sustainable Development'' EDF - CACIB Chair.}
}
\author{Maximilien \textsc{Germain}
\footnote{EDF R\&D, LPSM, Université de Paris  \sf \href{mailto:Maximilien.Germain at edf.fr}{mgermain at lpsm.paris}} \and  Huyên \textsc{Pham}
\footnote{LPSM, Université de Paris, FiME, CREST ENSAE \sf \href{mailto:pham at lpsm.paris}{pham at lpsm.paris}} \and  Xavier \textsc{Warin}
\footnote{EDF R\&D, FiME \sf \href{mailto:xavier.warin at  edf.fr}{xavier.warin at edf.fr}} 
}
\date{\today\\ {\it to appear in SIAM Journal on Scientific Computing}}
\def \trans{^{\scriptscriptstyle{\intercal}}}
\def \ep{\hbox{ }\hfill$\Box$}
\def \J{\mathbb{J}}
\def \M{\mathbb{M}}
\def \Sum{\displaystyle\sum}
\def\Zc{{\cal Z}}
\def\Gc{{\cal G}}
\def\Fc{{\cal F}}
\def\Uc{{\cal U}}
\def\Xc{{\cal X}}
\def\Nc{{\cal N}}
\def\Wc{{\cal W}}
\def\Jc{{\cal J}}
\def\eps{{\varepsilon}}
\def\bZc{\boldsymbol{\Zc}}
\def\Lc{{\cal L}}
\begin{document}

\maketitle

\begin{abstract}
  Recently proposed numerical algorithms  for solving high-dimensional nonlinear partial differential equations (PDEs) based on neural networks  have shown their remarkable performance.  
  We review some of them and study their convergence properties.  
  The methods rely on  probabilistic representation of PDEs by backward stochastic differential equations (BSDEs) and their iterated time discretization. Our proposed algorithm, called deep backward multistep scheme (MDBDP), is a machine learning version of the LSMDP scheme of Gobet, Turkedjiev (Math. Comp. 85, 2016). It estimates simultaneously by backward induction the solution and its gradient  by neural networks through sequential minimizations of suitable quadratic loss functions that are performed by stochastic gra\-dient descent.  
  Our main theoretical contribution is to provide  an  approximation error analysis of the MDBDP scheme as well as the deep splitting (DS) scheme for semilinear PDEs  designed  in Beck, Becker, Cheridito, Jentzen, Neufeld (2019). We also supplement the error analysis of the DBDP scheme of Huré, Pham, Warin (Math. Comp. 89, 2020). 
  This yields notably convergence rate in terms of the number of neurons for a class of deep Lipschitz continuous GroupSort neural networks when the PDE is linear in the gradient of the solution for the MDBDP scheme, and in the semilinear case for the DBDP scheme. 
  We illustrate our results with some numerical tests that are compared with some other  machine learning algorithms in the literature. 
\end{abstract}

\section{Introduction}
Let us consider the nonlinear parabolic  partial differential equation (PDE)  of the form
\begin{equation}\label{eq: semilinear PDE}
\begin{cases}
\partial_t u + \mu \cdot D_x u + \frac{1}{2} \Tr(\sigma\sigma\trans D^2_x u) \; = \;  f(\cdot,\cdot,u,\sigma\trans D_x u)  &\mathrm{ on } \ [0,T)\times\R^d\\
u(T,\cdot) \; = \;  g  &\mathrm{ on } \ \R^d, 
\end{cases}
\end{equation} 
with $\mu,\sigma$  functions defined on $[0,T]\times\R^d$, valued respectively in $\R^d$, and $\M^d$ (the set of $d\times d$ matrices), 
a nonlinear generator function $f$ defined on $[0,T]\times\R^d\times\R\times\R^d$, and a terminal function  $g$ defined on $\R^d$. Here,  
the operators $D_x, D^2_x$ refer respectively to the first and second order spatial derivatives, the symbol  $.$ denotes the scalar product, and $\trans$ is the transpose  of vector or matrix.     

A major challenge in the numerical resolution of such semilinear PDEs is the so-called "curse of dimensionality" making unfeasible the standard discretization  of the state space in dimension greater than 3. Probabilistic mesh-free methods based on the Backward Stochastic Differential Equation (BSDE) representation of semilinear PDEs through the nonlinear Feynman-Kac formula were developed in \cite{Z04}, \cite{BT04}, \cite{HLOTTW19}, and (ii) on  multilevel Picard methods, developed in  \cite{hutetal18}  with algorithms based on Picard iterations, multi-level techniques and automatic differentiation. These methods permit to handle some 
PDEs with non linearity in $u$ and its gradient $D_x u$, with  convergence results as well as numerous numerical examples showing their efficiency in high dimension.

Over the last few years,  machine learning methods have emerged since the pioneering papers by  \cite{HJE17}  and \cite{SS17}, and have shown their efficiency for solving high-dimensional 
nonlinear PDEs by means of  neural networks approximation. The work \cite{HJE17} introduces a global machine learning resolution technique via a BSDE approach. The solution is represented by one feedforward neural network by time step, whose parameters are chosen as solutions of a single global optimization problem. It allows to solve PDEs in high dimension and a convergence study of Deep BSDE is 
conducted in \cite{HL18}. 
The Deep Galerkin method of \cite{SS17} proposes another global meshfree method with a random sampling of time and space points inside a bounded domain. 
A different point of view  is proposed by \cite{HPW19} with convergence results in $L^2$ for solving semilinear PDEs, where the 
solution and its gradient are estimated simultaneously by backward induction through the minimization of sequential loss functions. Similar idea also 
appears  in \cite{SS17} for linear PDEs.  At the cost of solving multiple optimization problems, the Deep Backward scheme (DBDP) of \cite{HPW19} 
verifies better stability and accuracy properties than the global method in \cite{HJE17}, as illustrated on several test cases.  The recent paper \cite{BBCJN19} also introduces machine learning schemes based on local loss functions, called  Deep Splitting (DS) method which estimates the PDE solution through backward explicit local optimization problems relying on a neural network regression method for the computation of conditional expectations.

In this paper,  we propose machine learning schemes that use  multistep methods introduced in \cite{BD07} and  \cite{GT14}. 
The idea is to rely on the whole previously computed values of the discretized processes in the backward computations of the approximation as it is expected to yield a better propagation of regression errors. 
We shall  develop this approach  to the  DBDP scheme of  \cite{HPW19}, leading to the so-called  deep backward multi-step scheme (MDBDP). This can be viewed as  
a machine learning version of the Multi-step Forward Dynamic Programming method studied by \cite{GT14}. However, instead of solving at each time step two regression problems, our approach allows to consider only a single minimization as in the DBDP scheme. Compared to the latter, the multi-step consideration is expected to provide better accuracy by reducing the propagation of errors in the backward induction.Our main  theoretical contribution is a detailed study of the approximation error of MDBDP scheme, through standard stability-type arguments for BSDEs (see e.g. Section 4.4 in \cite{zhangBook} for the continuous time case). The arguments can be adapted to obtain the convergence of the  DS scheme introduced in \cite{BBCJN19}. Furthermore, by relying on recent approximation results for deep neural networks in \cite{TSB21}, 
we obtain a rate of convergence of our scheme in terms of the number of neurons, and  supplement the convergence analysis  of the DBDP scheme \cite{HPW19}.

We provide some numerical tests of our proposed algorithms, 
which show the benefit of multistep schemes, 
and compare our results with the cited machine learning schemes. Notice that the GroupSort network is used for theoretical analysis but in the numerical implementation, we applied standard networks with tanh as activation function. The theoretical analysis of the convergence of methods relying on standard neural networks is left to future research. More numerical examples and tests  are presented  
in the extended first arXiv version \cite{GPW20} of this paper.

The plan of the paper is the following.  In Section \ref{sec:semischeme}, we give a brief reminder on neural networks and notably on a specific class of deep  network functions considered in \cite{ALG19,TSB21}  that yields an approximation result with rate of convergence for Lipschitz functions. We also review  machine learning schemes for the numerical resolution of semilinear PDEs. We then describe in detail the MDBDP scheme.
We state in Section \ref{sec: convergence}  the convergence of the MDBDP, DS, and DBDP schemes, 
while Section \ref{sec:proof} is devoted to the proof of these results. 
Section \ref{sec: numerics} gives some numerical tests for illustration.

\section{BSDE Machine Learning Schemes for Semilinear PDEs} \label{sec:semischeme}

In this section, we review recent numerical schemes, and present our new scheme for  the resolution of the semi-linear PDE \eqref{eq: semilinear PDE} by approximations in the class of neural networks and relying on probabilistic representation of the solution to the PDE.

\subsection{Neural Networks} \label{sec:NN}

We denote by 
\begin{align*}
\Lc_{d_1,d_2}^{\rho} &=\;  \Big\{ \phi : \R^{d_1} \rightarrow \R^{d_2}: \exists \;  (\Wc,\beta) \in   \R^{d_2\times d_1} \times \R^{d_2}, \; 
\phi(x) \; = \: \rho( \Wc x + \beta) \; \Big\}, 
\end{align*}
the set of layer  functions with input dimension $d_1$, output dimension $d_2$, and activation function $\rho$ $:$ $\R^{d_2}$ $\rightarrow$ $\R^{d_2}$.  Usually, the activation is applied component-wise via a one-dimensional activation function, i.e.,  
$\rho_{}(x_1,\ldots,x_{d_2})$ $=$ $\big(\hat \rho(x_1),\ldots,\hat \rho(x_{d_2})\big)$ with $\hat \rho : \R \mapsto \R$, to the affine  map $x$ $\in$ $\R^{d_1}$ $\mapsto$ $\Wc x + \beta$ $\in$ $\R^{d_2}$, 
with a matrix $\Wc$ called weight, and vector $\beta$ called bias.    
Standard examples of activation functions $\hat\rho$ are  the sigmoid, the ReLU, the $\tanh$. 
When $\rho_{}$ is the identity function, we simply write $\Lc_{d_1,d_2}$.   

We then define 
\begin{align*}
\Nc^\rho_{d_0,d',\ell,m} &= \;  \Big\{ \varphi : \R^{d_0}  \rightarrow \R^{d'}: \exists   \phi_0 \in  \Lc^{\rho_0}_{d_0,m_0}, \; \exists \phi_i \in \Lc^{\rho_i}_{m_{i-1},m_i}, i=1,\ldots,\ell-1, \;   \\%,d'
& \hspace{4cm} \exists  \phi_\ell \in \Lc_{m_{l-1},d'},  \varphi  \; = \;  \phi_\ell \circ \phi_{\ell -1} \circ \cdots \circ \phi_0 \Big\},
\end{align*}
as the set of feedforward neural networks with input layer dimension $d_0$, output layer dimension $d'$, and $\ell$  hidden layers with $m_i$ neurons per layer ($i=0,\cdots,\ell-1$).   
These numbers $d_0,d',\ell$, the sequence $m$ $=$ $(m_i)_{i= 0,\ldots, \ell-1}$, and sequence of activation functions $\rho = (\rho_{i})_{i= 0,\ldots, \ell-1}$,  form the architecture of the network. 
In the sequel, we shall mostly work with the case 
$d_0$ $=$ $d$ (dimension of  the state variable $x$).

A given network function $\varphi$ $\in$ $\Nc^\rho_{d_0,d',\ell,m}$ is determined by the weight/bias parameters $\theta$ $=$ $(\Wc_0,\beta_0,\ldots,\Wc_\ell,\beta_\ell)$ defining the layer functions 
$\phi_0 \ldots,\phi_\ell$, and we shall sometimes write $\varphi$ $=$ $\varphi_\theta$.

\vspace{1mm}

\vspace{1mm}

We recall the fundamental result of \cite{HSW89} that justifies  the use of neural networks as function approximators, in the usual case of activation functions applied componentwise at each hidden layer.  

\vspace{1mm} 

\noindent {\bf Universal approximation theorem.}  
The space 
$\bigcup_{i=0}^{\ell-1} \bigcup_{m_i=0}^\infty \Nc^\rho_{d_0,d',\ell,m}$  is dense in  $L^2(\nu)$, the set of measurable functions $h$ $:$  $\R^{d_0}$ $\rightarrow$ $\R^{d'}$ s.t.  
$\int |h(x)|_{_2}^2 \nu(\di x) < \infty$, for any finite  measure $\nu$ on $\R^{d_0}$, whenever $\rho$ is continuous and non-constant. 

\vspace{3mm}

This universal approximation theorem does not provide any rate of convergence, nor reveals even in theory how to achieve a given accuracy for a fixed number of neurons.   Some results give rates for the approximation of functions in Sobolev spaces \cite{p99}, for bounded convex subdifferentiable Lipschitz functions \cite{b15} or bounded Lipschitz functions \cite{y17}, 
but here, we need  a result related to (possibly unbounded) Lipschitz functions. The paper \cite{B17} provides a possible answer in this direction, but we instead rely on a simpler approach in \cite{TSB21}, building on the GroupSort deep neural networks introduced by \cite{ALG19}. Let $\kappa\in\N^*,\ \kappa\geq 2$,  be a grouping size, dividing the number of neurons $m_i=\kappa n_i$, at each layer  
$i=0,\cdots \ell-1$. $\sum_{i=0}^{\ell-1} m_i$ will be refered to as the width of the network and $\ell+1$ as its depth. The GroupSort networks correspond to classical deep feedforward neural networks in $\Nc^{\zeta_\kappa}_{d,1,\ell,m}$ with a specific sequence of activation function ${\zeta_\kappa}$ $=$ $(\zeta_\kappa^i)_{i=0,\ldots,\ell-1}$, and one-dimensional output. Each nonlinear function ${\zeta_\kappa^i}$ divides its input into groups of size $\kappa$ and sorts each group in decreasing order, see Figure \ref{fig: groupsort}. 
\begin{figure}
    \centering
    \includegraphics[width=10cm]{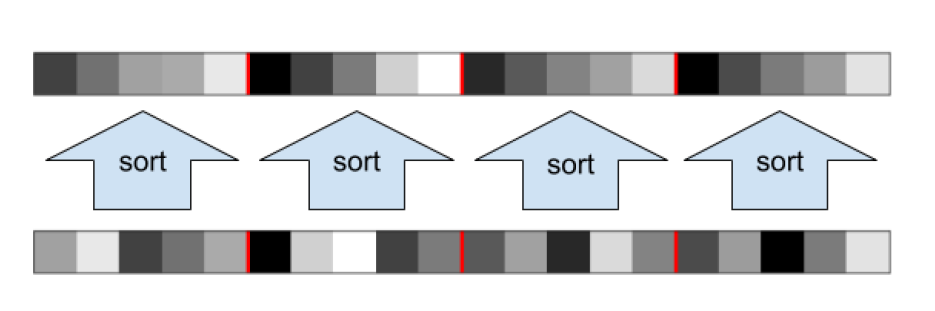}
    \caption{GroupSort activation function ${\zeta_\kappa}$ with grouping size $\kappa=5$ and $m=20$ neurons, figure from \cite{ALG19}.}
    \label{fig: groupsort}
\end{figure}
Moreover, by enforcing the parameters of the GroupSort to satisfy with the Euclidian norm $| \cdot|_2$ and the $\ell_\infty$ norm $|\cdot|_\infty$: 
\begin{equation}\label{eq: bound coeffs}
\sup_{| x|_2 = 1} | \Wc_0 x|_\infty\leq 1,\    \sup_{| x|_\infty = 1} | \Wc_i x|_\infty\leq 1,\ |\beta_j|_\infty \leq M,\ i=1,\cdots,l,\ j=0,\cdots,l
\end{equation} for some $M>0$, the related GroupSort neural networks from $\Nc^{\zeta_\kappa}_{d,d',\ell,m}$ are 1-Lipschitz. 
The space of such 1-Lipschitz GroupSort neural networks is called $\Sc_{d,\ell,m}^{\zeta_\kappa}$:
\begin{align*}
    \Sc_{d,\ell,m}^{\zeta_\kappa} = \{& \varphi_{(\Wc_0,\beta_0,\ldots,\Wc_\ell,\beta_\ell)} \in \Nc^{\zeta_\kappa}_{d,1,\ell,m},\ \sup_{| x|_2 = 1} | \Wc_0 x|_\infty\leq 1,\    \sup_{| x|_\infty = 1} | \Wc_i x|_\infty\leq 1,\\ & |\beta_j|_\infty \leq M,\  i=1,\cdots,l,\ j=0,\cdots,l \}.
\end{align*} 
We then introduce the set $\Gc_{K,d,d',\ell,m}^{\zeta_\kappa}$ as
\begin{align}
     \Gc_{K,d,d',\ell,m}^{\zeta_\kappa} := & \{\Psi = (\Psi_i)_{i=1,\ldots,d'} : \R^d \mapsto \R^{d'},\ \Psi_i: x\in\R^d \mapsto K \beta_i  \;\;\;   \phi_i\Big(\frac{x+\alpha_i}{\beta_i}\Big)\in\R,\nonumber\\ & \quad \phi_i \in  \Sc_{d,\ell,m}^{\zeta_\kappa}, \ 
     \mbox{ for some } \alpha_i\in\R^d,\ \beta_i > 0\}.
\end{align}
Notice that these networks are $\sqrt{d'}K$-Lipschitz and that each of their components is $K$-Lipschitz. We rely on the the following quantitative approximation result which directly follows from \cite{TSB21}. 

\begin{Proposition}[Slight extension of Tanielian, Sangnier, Biau \cite{TSB21} : Approximation theorem for Lipschitz functions by Lipschitz GroupSort neural networks.]\label{prop: tsb}
    Let $f:[-R,R]^d\mapsto \R^{d'}$ be  $K$-Lipschitz. Then, for all $\varepsilon>0$, there exists a GroupSort neural network $g$ in $\Gc_{K,d,d',\ell,m}^{\zeta_\kappa}$ verifying \begin{equation}
    \sup_{x\in[-R,R]^d} |f(x)-g(x)|_2 \leq \sqrt{d'}2RK \varepsilon
\end{equation} with $g$ of grouping size $\kappa=\lceil \frac{2\sqrt{d}}{\varepsilon}\rceil$, depth $\ell + 1 = O(d^2)$ and width $\sum_{i=0}^{\ell-1} m_i = O((\frac{2\sqrt{d}}{\varepsilon})^{d^2-1})$ in the case $d>1$. If $d=1$, the same result holds with $g$ of grouping size $\kappa=\lceil \frac{1}{\varepsilon}\rceil$, depth $\ell + 1 = 3$ and width $\sum_{i=0}^{\ell-1} m_i =O(\frac{1}{\varepsilon}) $.
\end{Proposition}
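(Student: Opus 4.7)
The plan is to reduce the statement to the scalar, 1-Lipschitz case on a canonical cube that is already covered by \cite{TSB21}. The class $\Gc_{K,d,d',\ell,m}^{\zeta_\kappa}$ is engineered precisely so that such a reduction is automatic: each component $\Psi_i$ is the composition of an input normalisation $x\mapsto (x+\alpha_i)/\beta_i$, a 1-Lipschitz scalar GroupSort network $\phi_i\in \Sc^{\zeta_\kappa}_{d,\ell,m}$, and an output scaling $u\mapsto K\beta_i u$. I will use this freedom to send the box $[-R,R]^d$ to $[0,1]^d$ and to rescale the target function to have Lipschitz constant $1$, apply the TSB approximation component by component, and then stack the resulting $d'$ scalar networks.

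Concretely, fix $1 \leq i \leq d'$ and set $\alpha_i = R\mathbf{1} \in \R^d$, $\beta_i = 2R$, so that $y := (x+\alpha_i)/\beta_i$ realises a bijection $[-R,R]^d \to [0,1]^d$. Since $f$ is $K$-Lipschitz in the Euclidean norm, each coordinate $f_i:[-R,R]^d\to\R$ is $K$-Lipschitz as well, and the rescaled function
\begin{equation*}
\tilde f_i(y) \; := \; \frac{1}{K\beta_i}\, f_i\bigl(\beta_i y - \alpha_i\bigr) \; = \; \frac{1}{2RK}\, f_i(2Ry - R\mathbf{1})
\end{equation*}
is scalar and $1$-Lipschitz on $[0,1]^d$. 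The result of \cite{TSB21} then yields, for every $\varepsilon > 0$, a network $\phi_i \in \Sc^{\zeta_\kappa}_{d,\ell,m}$ with $\sup_{y\in [0,1]^d} |\tilde f_i(y) - \phi_i(y)| \leq \varepsilon$, whose grouping size $\kappa$, depth $\ell+1$ and width $\sum_j m_j$ are exactly those stated in the proposition (the case split $d=1$ versus $d>1$ is inherited verbatim from \cite{TSB21}).

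Setting $\Psi_i(x) := K\beta_i \phi_i((x+\alpha_i)/\beta_i)$ and $g := (\Psi_1,\dots,\Psi_{d'})$, we have $g \in \Gc_{K,d,d',\ell,m}^{\zeta_\kappa}$ by construction, and for every $x \in [-R,R]^d$,
\begin{equation*}
|f_i(x) - \Psi_i(x)| \; = \; K\beta_i\, |\tilde f_i(y) - \phi_i(y)| \; \leq \; 2RK\varepsilon.
\end{equation*}
Summing the squared componentwise errors gives $\sup_x |f(x)-g(x)|_2 \leq \sqrt{d'}\,2RK\varepsilon$, as required. The only step that calls for care rather than being purely mechanical is lining up the canonical domain used in \cite{TSB21} with our normalisation, and matching the Euclidean Lipschitz control with the mixed $\ell^2$-to-$\ell^\infty$ norm constraint on $\Wc_0$ imposed in the definition of $\Sc^{\zeta_\kappa}_{d,\ell,m}$; any discrepancy is absorbed into the affine parameters $(\alpha_i,\beta_i)$ without affecting the orders in $d$ and $\varepsilon$ quoted in the statement, which is why the extension is only a slight one.
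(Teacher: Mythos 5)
Your proposal is correct and follows essentially the same route as the paper's own proof: rescale each component $f_i$ to a $1$-Lipschitz function on $[0,1]^d$ via the affine change of variables (the paper writes it as $\widetilde{f}_i(z)=f_i(2R(z-1/2))/(2RK)$, which is the same map as yours), invoke Theorem 3 (resp.\ Proposition 5 for $d=1$) of \cite{TSB21}, undo the rescaling, and concatenate the $d'$ scalar networks to get the $\sqrt{d'}\,2RK\varepsilon$ bound. No meaningful differences.
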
   
\begin{proof}
     With $f_i$ the $i$-th component of $f$, define \begin{align}\label{eq: tilde f}
        \widetilde{f}_i:z\in [0,1]^d \mapsto \frac{f_i(2R(z-1/2))}{2RK}.
    \end{align} Then $\widetilde{f}_i$ is 1-Lipschitz and by Theorem 3 from \cite{TSB21} if $d>1$ (or Proposition 5 from \cite{TSB21} if $d=1$), there exists a $1$-Lipschitz GroupSort neural network $g_i\in\Sc_{d,\ell,m}^{\zeta_\kappa}$ verifying \begin{equation}
    \sup_{z\in[0,1]^d} |\widetilde{f}_i(z) - g_i(z)| \leq  \varepsilon
\end{equation} with $g_i$ of grouping size $\kappa = O(\frac{2\sqrt{d}}{\varepsilon})$, depth $\ell + 1 = O(d^2)$ and width $\sum_{i=0}^{\ell-1} m_i =O((\frac{2\sqrt{d}}{\varepsilon})^{d^2-1})$(respectively grouping size $\kappa=O( \frac{1}{\varepsilon})$, depth $\ell + 1 = 3$ and width $\sum_{i=0}^{\ell-1} m_i$ $=$ $O(\frac{1}{\varepsilon})$ if $d=1$).  Inverting \eqref{eq: tilde f} we have  $ f_i(x) = 2KR \widetilde{f}_i(\frac{x +R}{2R})$ hence
\begin{equation}
    \sup_{x\in[-R,R]^d} \Big|f_i(x) - 2 KR g_i\Big(\frac{x +R}{2R}\Big)\Big| \leq  2KR\varepsilon.
\end{equation}
The result is proven by concatenating the $d'$  $K$-Lipschitz GroupSort networks $x\mapsto 2 KR 
g_i(\frac{x +R}{2R})$, $i=1,\cdots,d'$.
\end{proof}

\begin{Remark}
    As mentioned in \cite{TSB21}, GroupSort neural networks generalize the ReLU networks and, thanks to their Lipschitz continuity, offer better stability regarding noisy inputs and adversarial attacks. It also appears that GroupSort networks are more expressive than ReLU ones.
\end{Remark}

\subsection{Existing Schemes}

We review recent machine learning schemes that will serve as benchmarks for our new scheme described in the next section. All these schemes rely on  BSDE 
representation of the solution to the PDE, and differ according to the formulation of the time discretization of the  BSDE.  

For this purpose, let us introduce the diffusion process $\Xc$ in $\R^d$ associated to the linear part of the differential operator in  the PDE \eqref{eq: semilinear PDE}, namely: 
\begin{align} \label{eq:SDE}
\Xc_t &= \Xc_0 + \int_0^t \mu(s,\Xc_s) \di s +  \int_0^t \sigma(s,\Xc_s) \di W_s, \quad 0 \leq t \leq T, 
\end{align}
where $W$ is a $d$-dimensional standard Brownian motion on some  probability space  $(\Omega,\mathcal{F},\Pro)$ equipped with a filtration $\F$ $=$ $(\mathcal{F}_t)_t$, and 
$\Xc_0$ is an 
$\mathcal{F}_0$-measurable random variable valued in $\R^d$.   
Recall from  \cite{PP90} that  the solution $u$ to the PDE \eqref{eq: semilinear PDE} admits a proba\-bilistic representation in terms of the BSDE:
\begin{equation} \label{BSDEfeyn} 
Y_t = g(\Xc_T) - \int_t^T f(s,\Xc_s,Y_s,Z_s) \di s - \int_t^T Z_s.  \di W_s, \quad 0 \leq t \leq T, 
\end{equation} 
via the Feynman-Kac formula $Y_t$ $=$ $u(t,\Xc_t)$, $0\leq t \leq T$. When $u$ is a smooth function, this BSDE representation is directly obtained by It\^o's formula applied to 
$u(t,\Xc_t)$, and we have $Z_t$ $=$  $\sigma(t,\Xc_t)\trans D_x u(t,\Xc_t)$, $0\leq t \leq T$.

\vspace{1mm}

Let $\pi$ be a subdivision $\{t_0=0<t_1<\cdots<t_N=T\}$ with modulus  $|\pi| := \sup_i \Delta t_i$, $\Delta t_i$ $:=$ $t_{i+1} - t_i$, 
satisfying $|\pi| = O\left(\frac{1}{N}\right)$, and consider the Euler scheme
\begin{align} \label{eq: Euler}
X_{i} & = \Xc_0 +  \sum_{j=0}^{i-1}  \mu(t_j,X_{j}) \Delta t_j +  \sum_{j=0}^{i-1}    \sigma(t_j,X_{j}) \Delta W_j, \quad i=0,\ldots,N, 
\end{align} 
where  $\Delta W_j$ $:=$ $W_{t_{j+1}} - W_{t_j}$, $j$ $=$ $0,\ldots,N$.  When the diffusion $\Xc$ cannot be simulated, we shall rely on the 
simulated paths of $(X_i)_i$  that act as training data in the setting of machine learning, and thus our training set  can be chosen as large as desired.

The time discretization of the BSDE \eqref{BSDEfeyn} is written in backward induction as 
\begin{align} \label{Yinduc}
Y_{i}^\pi & = \; Y_{i+1}^\pi - f(t_i,X_i,Y_{i}^\pi,Z_{i}^\pi) \Delta t_i  - Z_{i}^\pi. \Delta W_{i}, \quad i=0,\ldots,N-1,  
\end{align}
which also reads as  conditional expectation formulae
\begin{equation} \label{Ycond}
\left\{
\begin{array}{ccl}
Y_{i}^\pi & = &  \E_i \Big[ Y_{{i+1}}^\pi - f(t_i,X_i,Y_{i}^\pi,Z_{i}^\pi) \Delta t_i  \Big]  \\
Z_{i}^\pi & = &  \E_i \Big[  \frac{\Delta W_i}{\Delta t_i} Y_{{i+1}}^\pi \Big], \quad \quad i=0,\ldots,N-1, 
\end{array}
\right.
\end{equation}
where $\E_i$ denotes the conditional expectation w.r.t. $\mathcal{F}_{t_i}$.  Alternatively, by iterating relations \eqref{Yinduc} together with the terminal relation $Y_{N}^\pi$ $=$ $g(X_N)$, we have 
\begin{align} \label{Yiter} 
Y_{i}^\pi & = \; g(X_N) - \sum_{j=i}^{N-1} \big[  f(t_j,X_j,Y_{j}^\pi,Z_{j}^\pi) \Delta t_j  +   Z_{j}^\pi. \Delta W_{j} \big], \quad i=0,\ldots,N-1. 
\end{align}

\vspace{2mm}

\noindent $\bullet$ {\bf Deep BSDE scheme  \cite{HJE17}.} 

\vspace{1mm}

\noindent The idea of the method is to treat the backward equation \eqref{Yinduc} as a forward equation by appro\-ximating the initial condition $Y_0$ and the  $Z$ component at each time  by networks functions of the 
$X$ process, so as to match the terminal condition.  More precisely, the problem is to minimize over  network functions $\Uc_0$ $:$ $\R^d$ $\rightarrow$ $\R$, and  sequences of network  functions 
$\bZc$ $=$ $(\Zc_i)_i$, $\Zc_i$ $:$ $\R^d$ $\rightarrow$  $\R^d$, $i$ $=$ $0,\ldots,N-1$,  the global quadratic loss function
\begin{align*}
J_G(\Uc_0,\bZc) &=\; \E \Big| Y_N^{\Uc_0,\bZc} - g(X_N) \Big|^2,
\end{align*}
where $(Y_i^{\Uc_0,\bZc})_i$ is defined by forward induction as 
\begin{align*}
Y_{i+1}^{\Uc_0,\bZc} & = \; Y_{i}^{\Uc_0,\bZc} +  f(t_i,X_i,Y_i^{\Uc_0,\bZc},\Zc_i(X_i)) \Delta t_i + \Zc_i(X_i).\Delta W_i, \quad i = 0,\ldots,N-1, 
\end{align*}
starting from $Y_0^{\Uc_0,\bZc}$ $=$ $\Uc_0(\Xc_0)$. 
The output of this scheme, for the solution $(\widehat{\Uc}_0,\widehat\bZc)$ to this global minimization problem, provides an approximation 
$\widehat{\Uc}_0$ of the solution $u(0,.)$ to the PDE at time $0$, and approximations $Y_i^{\widehat{\Uc}_0,\widehat\bZc}$  of the solution to the PDE \eqref{eq: semilinear PDE} at times $t_i$ evaluated at 
$\Xc_{t_i}$, i.e., of   $Y_{t_i}$ $=$ $u(t_i,\Xc_{t_i})$, $i$ $=$ $0,\ldots,N$.

\vspace{3mm}

\noindent $\bullet$  \textbf{Deep Backward Dynamic Programming (DBDP) \cite{HPW19}.}

\vspace{1mm}

\noindent  The method relies on  the backward dynamic programming relation \eqref{Yinduc}  arising from the time discretization of the BSDE, and learns simultaneously at each time step $t_i$ the 
pair $(Y_{t_i},Z_{t_i})$ with neural networks trained with the forward process $X$ and the Brownian motion $W$.  The  scheme has two versions: 
\begin{itemize}
	\item[1.] {\it  DBDP1}. Starting from $\widehat{\Uc}_N^{(1)}$ $=$ $g$, proceed by backward induction for $i$ $=$ $N-1,\ldots,0$, by minimizing over network functions 
	$\Uc_i$ $:$ $\R^d$ $\rightarrow$ $\R$, and $\Zc_i$ $:$ $\R^d$ $\rightarrow$ $\R^d$ the local quadratic  loss function 
	\begin{align*}
	J_i^{(B1)}(\Uc_i,\Zc_i) &=\; \E \Big| \widehat{\Uc}_{i+1}^{(1)}(X_{i+1}) -  \Uc_i(X_i) \\ & -  f(t_i,X_i,\Uc_i(X_i),\Zc_i(X_i)) \Delta t_i -  \Zc_i(X_i).\Delta W_i \Big|^2, 
	\end{align*}
	and update $(\widehat{\Uc}_i^{(1)},\widehat{\Zc}_i^{(1)})$ as the solution to this local minimization problem. 
	\item[2.] {\it  DBDP2}. Starting from $\widehat{\Uc}_N^{(2)}$ $=$ $g$, proceed by backward induction for $i$ $=$ $N-1,\ldots,0$, by minimizing over $C^1$ network functions  
	$\Uc_i$ $:$ $\R^d$ $\rightarrow$ $\R$ the local quadratic  loss function 
	\begin{align*}
	&J_i^{(B2)}(\Uc_i)  \\ &=\; \E \Big| \widehat{\Uc}_{i+1}^{(2)}(X_{i+1}) -  \Uc_i(X_i) -  f(t_i,X_i,\Uc_i(X_i), \sigma(t_i,X_i)\trans D_x\Uc_i(X_i)) \Delta t_i \\
	&  \quad \quad \quad - \;   D_x\Uc_i(X_i)\trans\sigma(t_i,X_i)\Delta W_i \Big|^2, 
	\end{align*}
	where $D_x \Uc_i$ is the automatic differentiation of the network function $\Uc_i$. Update $\widehat{\Uc}_i^{(2)}$ as  the solution to this problem, and set 
	$\widehat{\Zc}_i^{(2)}$ $=$ $\sigma\trans(t_i,.)D_x{\Uc}_i^{(2)}$. 
\end{itemize}
The output of DBDP  provides an approximation $(\widehat{\Uc}_i^{},\widehat{\Zc}_i^{})$  
of the solution $u(t_i,.)$ and its gradient $\sigma\trans(t_i,.)D_xu(t_i,.)$  to the PDE \eqref{eq: semilinear PDE} at times $t_i$, $i$ $=$ $0,\ldots,N-1$. 
The approximation error has been analyzed in \cite{HPW19}.

\begin{Remark}
	{\rm A machine learning scheme in the spirit of regression-based Monte-Carlo methods (\cite{BT04}, \cite{GLW05}) 
		for approximating condition expectations in the time discretization  \eqref{Ycond} of  the BSDE,  can be formulated as follows: 
		starting from $\hat\Uc_N$ $=$ $g$, proceed by backward induction for $i$ $=$ $N-1,\ldots,0$, in two regression  problems:  
		\begin{itemize}
			\item[(a)] Minimize over network functions $\Zc_i$ $:$ $\R^d$ $\rightarrow$ $\R^d$ 
			\begin{align*}
			J_{i}^{r,Z}(\Zc_i) & = \; \E \Big| \frac{\Delta W_i}{\Delta t_i} \widehat{\Uc}_{i+1}(X_{i+1}) -  \Zc_i(X_i) \Big|^2 
			\end{align*}
			and update $\widehat{\Zc}_i$ as  the solution to this minimization problem
			\item[(b)]  Minimize over network functions $\Uc_i$ $:$ $\R^d$ $\rightarrow$ $\R$
			\begin{align*}
			J_{i}^{r,Y}(\Uc_i) &= \; \E \Big|  \widehat{\Uc}_{i+1}^{}(X_{i+1}) -  \Uc_i(X_i) -  f(t_i,X_i,\Uc_i(X_i),\widehat{\Zc}_i(X_i)) \Delta t_i   \Big|^2
			\end{align*} 
			and update $\widehat{\Uc}_i$ as  the solution to this minimization problem. 
		\end{itemize}
		Compared to these regression-based schemes, the DBDP scheme approximates simultaneously the pair component $(Y,Z)$ via the minimization of the loss functions $J_i^{(B1)}(\Uc_i,\Zc_i)$ (or 
		$J_i^{(B2)}(\Uc_i)$ for the second version), $i$ $=$ $N-1,\ldots,0$. One advantage of this latter approach is that the accuracy of the DBDP scheme can be tested when computing at each time step the infimum of loss function, which should be equal to zero for the exact solution (up to the time discretization).   In contrast, the infimum of the  loss functions in the regression-based schemes is not  known for the exact solution as it corresponds in theory to the residual of $L^2$-projection, and thus the accuracy of the scheme cannot be tested directly in-sample.  Moreover, a variant where the automatic differentiation  $D_x\Uc_i(X_i) $ is performed to estimate $Z_{t_i}$ instead of using a second neural network $\widehat{\Zc}_i$ (similarly as in the previous DBDP2 scheme) can also be considered. In this case, one only needs to solve for each time step the (b) optimization problem and not the (a) problem anymore.
	}
	\ep
\end{Remark}

\vspace{2mm}

\noindent $\bullet$ \textbf{Deep Splitting (DS) scheme \cite{BBCJN19}.} 

\vspace{1mm}

\noindent This method also proceeds by backward induction as follows:  
\begin{itemize}
	\item[-] Minimize  over  $C^1$ network functions $\Uc_N$ $:$ $\R^d$ $\rightarrow$ $\R$ the terminal loss function
	\begin{align*}
	J_N^S(\Uc_N) &= \;  \E\Big| g(X_N) - \Uc_N(X_N) \Big|^2,
	\end{align*}
	and denote by $\widehat{\Uc}_N$ as  the solution to this minimization problem. If $g$ is $C^1$, we can choose directly $\widehat{\Uc}_N$ $=$ $g$.
	\item[-] For $i$ $=$ $N-1,\ldots,0$, minimize over $C^1$ network functions $\Uc_i$ $:$ $\R^d$ $\rightarrow$ $\R$ the loss function 
	\begin{align} 
	& J_i^S(\Uc_i) \nonumber \\
	& = \;  \E \Big|  \widehat{\Uc}_{i+1}^{}(X_{i+1}) -  \Uc_i(X_i) \nonumber  \\ & \quad \quad \quad -  f(t_i,X_{i+1},\widehat{\Uc}_{i+1}(X_{i+1}),\sigma(t_i,X_i)\trans D_x\widehat{\Uc}_{i+1}(X_{i+1})) \Delta t_i   \Big|^2, \label{lossDS} 
	\end{align}

	and update $\widehat\Uc_i$ as  the solution to this minimization problem.  Here $D_x$ refers again to the automatic differentiation operator for network functions.  

\end{itemize}
The DS scheme combines ideas  of the DBDP2 and  regression-based schemes where the current regression-approximation on $Z$ is replaced by the automatic differentiation of the 
network function computed at the previous step. The current approximation of $Y$ is then computed by a  regression network-based scheme.  
In Section \ref{sec: convergence}, we shall analyze  the approximation error of the DS scheme. Please note that in \eqref{lossDS} we consider a slight modification of the original DS scheme from \cite{BBCJN19}. In their loss function, the term $f(t_i,X_{i+1},\widehat{\Uc}_{i+1}(X_{i+1}),\sigma(t_i,X_i)\trans D_x\widehat{\Uc}_{i+1}(X_{i+1}))$ is replaced by  $f(t_{i+1},X_{i+1},\widehat{\Uc}_{i+1}(X_{i+1}),\sigma(t_{i+1},X_{i+1})\trans D_x\widehat{\Uc}_{i+1}(X_{i+1}))$.

\subsection{Deep Backward Multi-step Scheme (MDBDP)} \label{subsec:MDBDP}

The starting point of the MDBDP  scheme is the iterated representation \eqref{Yiter} for the time discretization of the BSDE. 
%combines features of the DBDP  scheme  presented in the previous section and multi-step methods. 
This backward scheme is described as follows: for $i$ $=$ $N-1,\ldots,0$, minimize 
over network functions $\Uc_i$ $:$ $\R^d$ $\rightarrow$ $\R$, and $\Zc_i$ $:$ $\R^d$ $\rightarrow$ $\R^d$ the loss function 
\begin{align} 
J_i^{MB}(\Uc_i,\Zc_i) & = \; \E \Big| g(X_N) - \sum_{j=i+1}^{N-1} f(t_j,X_j,\widehat{\Uc}_j(X_j),\widehat{\Zc}_j(X_j))\Delta t_j - \sum_{j=i+1}^{N-1} \widehat{\Zc}_j(X_j). \Delta W_j  \nonumber \\
& \quad \quad \quad - \;  \Uc_i(X_i) -  f(t_i,X_i,\Uc_i(X_i),\Zc_i(X_i)) \Delta t_i -  \Zc_i(X_i).\Delta W_i \Big|^2  \label{JMB}
\end{align}
and update $(\widehat{\Uc}_i^{},\widehat{\Zc}_i^{})$ as the solution to this  minimization problem.  
This output  provides an approxi\-mation $(\widehat{\Uc}_i,\widehat{\Zc}_i)$ of the solution $u(t_i,.)$ to the PDE \eqref{eq: semilinear PDE} at times $t_i$, $i$ $=$ $0,\ldots,N-1$. This  
appro\-ximation error  will be analyzed in Section \ref{sec: convergence}. 

MDBDP  is a machine learning version of the Multi-step Dynamic Programming method studied by \cite{BD07} and \cite{GT14}.  
Instead of solving at each time step two regression problems, our approach allows to consider only a single minimization as in the DBDP scheme. Compared to the latter, the multi-step consideration is expected to provide better accuracy by reducing the propagation of errors in the backward induction.

	\begin{Remark}\label{rem: DBDP2}
		We could have also considered, as in the DBDP2 scheme, the automatic differentiation of $\widehat{\Uc}_i$ for the approximation of the gradient $Z_{t_i}$. However, as shown in the numerical tests of \cite{HPW19}, this approach leads to less accurate results %\blue{worse !! not as good.. } 
		than the DBDP1 algorithm which uses an additional neural network.  Moreover, at least for theoretical analysis, it requires to optimize over $C^1$ neural networks, which is a restrictive assumption. Hence we focus on a DBDP1-type method.
\end{Remark}

In the numerical implementation, the expectation defining the loss function $J_i^{MB}$ in \eqref{JMB} is replaced by an empirical average leading to the so-called {\it generalization} (or estimation) error, 
largely studied in the statistical community, see \cite{gyo02}, and more recently \cite{huretal18}, \cite{becjenkuc20} and the references therein.    
Moreover,  recalling the parametrization $(\Uc^\theta,\Zc^\theta)$ of neural network functions in $\Nc_{d,1,\ell,m}^\rho\times\Nc_{d,d,\ell,m}^\rho$, the minimization of the empirical average is amenable to stochastic gradient descent (SGD) extensively used in machine learning. More precisely, given a fixed time step $i$ $=$ $N-1,\ldots,0$, at each iteration  of the SGD,  
we pick a sample $(X_j^k,\Delta W_j^k)_{j=i,\ldots,N}$ of the Euler process and increment of Brownian motion $(X_j,\Delta W_j)_j$, $k$ $=$ $1,\ldots,K$, of mini-batch size $K$,  and 
consider the empirical loss function: 
\begin{align}
&\J_i^{K}(\theta) \nonumber \\ & = \;  \frac{1}{K} \sum_{k=1}^K 
\Big| g(X_N^k) - \sum_{j=i+1}^{N-1} f(t_j,X_j^k,\widehat{\Uc}_j(X_j^k),\widehat{\Zc}_j(X_j^k))\Delta t_j - \sum_{j=i+1}^{N-1} \widehat{\Zc}_j(X_j^k). \Delta W_j^k  \nonumber \\
& \hspace{3cm}  - \;  \Uc^\theta(X_i^k) -  f(t_i,X_i^k,\Uc^\theta(X_i^k),\Zc^\theta(X_i^k)) \Delta t_i -  \Zc^\theta(X_i^k).\Delta W_i^k  \Big|^2, \label{Jtheta} 
\end{align}
where $\widehat{\Uc}_j$ $=$ $\Uc_j^{\hat\theta_j}$, $\widehat{\Zc}_j$ $=$ $\Zc_j^{\hat\theta_j}$, and $\hat\theta_j$ is the resulting parameter from the SGD obtained at dates $j$ $\in$ $\llbracket i+1,N-1\rrbracket$. 
%$=$ $i+1,\ldots,N-1$. 
In practice, the number of iterations for SGD 
at the initial induction time $N-1$ should be large enough so as to learn accurately the  value function $u(t_{N-1},.)$ and its gradient $D_x u(t_{N-1},.)$ via 
$\widehat{\Uc}^{\hat\theta_{N-1}}$ and $\widehat{\Zc}^{\hat\theta_{N-1}}$. However,  it is then expected 
%(by continuity in time of the value function) 
that $(\widehat{\Uc}_j,\widehat{\Zc}_j)$ does not vary 
a lot  from $j$ $=$ $i+1$ to $i$, which means that at time $i$, one can design the SGD with initialization parameter equal to the resulting parameter from  the previous SGD at  time $i+1$, and then use few iterations to obtain accurate values of $\widehat{\Uc_i}$ and $\widehat{\Zc}_i$.  This observation allows to reduce significantly the computational time in (M)DBDP scheme when applying sequentially  
$N$ SGD.  The SGD  algorithm for computing an approximate minimizer of the loss function induces the so-called {\it optimization} error, which has been extensively studied in the  stochastic algorithm and machine learning communities, see \cite{bachmou13}, \cite{berfor13},  \cite{becjenkuc20}, and the references therein.

\vspace{3mm}

\begin{algorithm2e} \label{AlgoMDBDP} 
	\DontPrintSemicolon 
	\SetAlgoLined 
	\vspace{1mm}
	{\bf Data:} 
	Initial parameter $\hat\theta_N$. A sequence of number of iterations $(S_i)_{i=0,\ldots,N-1}$ 
	
	\vspace{0.5mm}
	
	\For{ $i$ $=$ $N-1,\ldots,0$} 
	{ Initial parameter $\theta_i$ $\leftarrow$ $\hat\theta_{i+1}$   
		
		Set $s$ $=$ $1$
		
		\While{ $s$ $\leq$ $S_i$} 
		{Pick a  sample of $(X_j,\Delta W_j)_{j=i,\ldots,N}$ of mini-batch size $K$
			
			Compute the gradient $\nabla \J_i^{K}(\theta)$ of $\J_i^{K}(\theta)$ defined in \eqref{Jtheta}
			
			Update $\theta_i$ $\leftarrow$ $\theta_i - \eta \nabla \J_i^{K}(\theta_i)$ \mbox{ with } $\eta$ learning rate 
			
			$s$ $\leftarrow$ $s+1$ 
			
		}
		Return $\hat\theta_i$ $\leftarrow$ $\theta_i$, $\widehat{\Uc}_i$ $=$ $\Uc^{\hat\theta_i}$,  $\widehat{\Zc}_i$ $=$ $\Zc^{\hat\theta_i}$ \tcc*{Update parameter, function and derivative} 
	}
	\caption{MDBDP  scheme. \label{MDBDPscheme}}
\end{algorithm2e}

\section{Convergence Analysis}\label{sec: convergence}

This section is devoted to the approximation error  and rate of convergence  of the MDBDP, DS, and DBDP schemes 
described in Section \ref{sec:semischeme}.

\vspace{1mm}

We make the following standard assumptions on the coefficients of the  forward-backward equation associated to semilinear PDE \eqref{eq: semilinear PDE}.  

\vspace{1mm}

\begin{Assumption} \label{H1}
	\begin{itemize}
		\item[(i)] $\Xc_0$ is square-integrable : $\Xc_0$ $\in$ $L^2(\Fc_0,\R^d)$.
		\item[(ii)] The functions $\mu$ and $\sigma$  are Lipschitz in $x$ $\in$ $\R^d$, uniformly in $t$ $\in$ $[0,T]$. 
		\item[(iii)] The generator function $f$ is $1/2$-H\"older continuous in time and Lipschitz continuous in all other variables: $\exists$ $[f]_{_L}$ $>$  $0$ such that for all 
		$(t,x,y,z)$ and $(t',x',y',z')$ $\in$ $[0,T]\times\R^d\times\R\times\R^d$,
		\begin{align*}
		& |f(t,x,y,z) - f(t',x',y',z')|  \\ &\leq \;  [f]_{_L} \big( |t-t'|^{1/2}+ |x-x'|_{_2} +|y-y'|+ |z-z'|_{_2} \big).
		\end{align*}
		Moreover,  $\sup_{t\in[0,T]} |f(t,0,0,0)|$ $<$  $\infty$. 
		\item[(iv)]  The function $g$ satisfies a linear growth condition. 
	\end{itemize}
\end{Assumption}

Assumption  \ref{H1}  guarantees the  existence and uniqueness of an adapted solution $(\Xc,Y,Z)$ to the forward-backward equation \eqref{eq:SDE}-\eqref{BSDEfeyn}, satisfying 
\begin{align*}
\E \Big[ \sup_{0\leq t\leq T}|\Xc_t |_{_2}^2 + \sup_{0\leq t\leq T}|Y_t|^2 + \int_0^T |Z_t|^2_{_2} \di t \Big] & < \; \infty, 
\end{align*}
( see for instance Theorem 3.3.1, Theorem 4.2.1, Theorem 4.3.1  from \cite{zhangBook}). 
Given the time grid $\pi$ $=$ $\{t_i: i=0,\ldots,N\}$, let us introduce the $L^2$-regularity of $Z$:
\begin{align*}
\varepsilon^Z(\pi) & := \; \E \bigg[ \sum_{i=0}^{N-1} \int_{t_i}^{t_{i+1}} |Z_t - \bar Z_{t_i} |_{_2}^2 \di t \bigg],
\;\;\; \mbox{ with } \; \bar Z_{t_i} \; := \; \frac{1}{\Delta t_i} \E_i \Big[  \int_{t_i}^{t_{i+1}} Z_t \di t \Big]. 
\end{align*}
Since $\bar Z$ is a  $L^2$-projection of $Z$, we know that $\eps^Z(\pi)$ converges to zero when $|\pi|$ goes to zero. Moreover, as shown in \cite{Z04},  
when $g$ is also Lipschitz, we have 
\begin{align*}
\eps^Z(\pi)  & = \; O(|\pi|). 
\end{align*}
Here, the standard notation $O(|\pi|)$ means that  $\limsup_{|\pi| \rightarrow 0}  \; |\pi|^{-1} O(|\pi|)$ $<$ $\infty$.

\begin{Lemma}\label{lem: euler Lipschitz}
Under Assumption \ref{H2} (ii), the following standard estimate for the Euler-Maruyama scheme holds when $\Delta t_i\rightarrow 0$ 
\begin{equation}
    \E|X_{i+1}^{x} - X_{i+1}^{x'} |_2^2 \leq (1+C\Delta t_i) |x-x'|_2^2,
\end{equation} where $X_{i+1}^{x} := x + \mu(t_i,x)\Delta t_i +  \sigma(t_i,x)\Delta W_{i}$.
\end{Lemma}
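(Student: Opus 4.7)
The plan is to directly expand the one-step Euler increment, take expectation, and exploit that $\Delta W_i$ has mean zero and variance $\Delta t_i$ per component. Writing
\[
X_{i+1}^{x} - X_{i+1}^{x'} \;=\; (x-x') \;+\; \bigl(\mu(t_i,x)-\mu(t_i,x')\bigr)\Delta t_i \;+\; \bigl(\sigma(t_i,x)-\sigma(t_i,x')\bigr)\Delta W_i,
\]
I would square and take expectation. Since $\Delta W_i$ is independent of $x,x'$ and has zero mean, the two cross terms containing $\Delta W_i$ vanish, while the stochastic quadratic term produces $\|\sigma(t_i,x)-\sigma(t_i,x')\|_F^2\,\Delta t_i$ (using $\E[\Delta W_i \Delta W_i\trans]=\Delta t_i\,I_d$). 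The remaining terms are deterministic.

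Next, I would invoke the Lipschitz property of $\mu$ and $\sigma$ in $x$ (uniform in $t$) from Assumption \ref{H1}(ii) with constants $[\mu]_L,[\sigma]_L$. Using Cauchy–Schwarz on the deterministic cross term $2(x-x')\cdot(\mu(t_i,x)-\mu(t_i,x'))\Delta t_i$ gives a bound by $2[\mu]_L|x-x'|_2^2\,\Delta t_i$; the pure drift quadratic term is bounded by $[\mu]_L^2|x-x'|_2^2(\Delta t_i)^2$; and the diffusion term by $[\sigma]_L^2|x-x'|_2^2\,\Delta t_i$. Collecting,
\[
\E|X_{i+1}^{x}-X_{i+1}^{x'}|_2^2 \;\leq\; |x-x'|_2^2 \Bigl( 1 + 2[\mu]_L\,\Delta t_i + [\mu]_L^2(\Delta t_i)^2 + [\sigma]_L^2\,\Delta t_i \Bigr).
\]

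Finally, since $|\pi|\to 0$ (hence $\Delta t_i$ is bounded, say $\Delta t_i\leq T$), the quadratic $(\Delta t_i)^2$ term is absorbed by writing $(\Delta t_i)^2\leq T\,\Delta t_i$, yielding the stated inequality with $C := 2[\mu]_L + [\mu]_L^2 T + [\sigma]_L^2$. There is no real obstacle here: the argument is a standard textbook one-step stability estimate for the Euler–Maruyama scheme, and the only mild subtlety is keeping track that $\Delta W_i$ is independent of the initial data $x,x'$ so that the Itô-isometry-type identity $\E|M\Delta W_i|^2 = \|M\|_F^2 \Delta t_i$ applies with $M=\sigma(t_i,x)-\sigma(t_i,x')$.
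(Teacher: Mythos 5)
Your argument is correct and is essentially the paper's own proof written out in full detail: the paper likewise expands the square, notes that the cross term in $\Delta W_i$ has zero expectation, and observes that the remaining terms are of order $\Delta t_i$ by the Lipschitz continuity of $\mu$ and $\sigma$. You also correctly trace the Lipschitz hypothesis to Assumption \ref{H1}(ii) (the lemma's reference to Assumption \ref{H2}(ii) is a typo in the paper), so nothing further is needed.
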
\begin{proof}
    By expanding the square, simply notice that the dominant terms when $\Delta t_i\rightarrow 0$ are of older $\Delta t_i$ because the term of order $\sqrt{\Delta t_i}$, namely $ (x-x')\cdot(\sigma(t_i,x)-\sigma(t_i,x'))\Delta W_{i}$ has a null expectation and all other terms are dominated by $\Delta t_i$.
\end{proof}

\subsection{Convergence of the MDBDP Scheme}\label{sec: convergence multi step}

We fix classes of  functions $\Nc_i$ and $\Nc'_i$ 
for the approximations respectively of the solution and its gradient, and define  $(\widehat{\Uc}_i^{(1)},\widehat{\Zc}_i^{(1)})$ as the  output of the MDBDP scheme at times $t_i$, $i$ $=$ $0,\ldots,N$. 

\vspace{1mm}

Let us define (implicitly)  the  process 
\begin{equation}  \label{defVi}
\left\{
\begin{array}{ccl}
V_{i}^{(1)} & = & \E_i\Big[g(X_N) - f\big(t_{i},X_i,V_{i}^{(1)},\overline{\widehat{Z}_{i}}^{(1)}\big)\Delta t_i - \Sum_{j=i+1}^{N-1} f\big(t_{j},X_j,\widehat{\Uc}_{j}^{(1)}(X_{j}),\widehat{\Zc}_{j}^{(1)}(X_{j})\big)\Delta t_j \Big],  \\
\overline{\widehat{Z}}_i^{(1)} & = &  \E_i\Big[ \frac{g(X_N)\Delta W_i}{\Delta t_i} - \Sum_{j=i+1}^{N-1} f\big(t_{j},X_j,\widehat{\Uc}_{j}^{(1)}(X_{j}),\widehat{\Zc}_{j}^{(1)}(X_{j})\big)
\frac{\Delta W_i \Delta t_j}{\Delta t_i}  \Big],  \ i=0,\ldots,N,  
\end{array}
\right.
\end{equation}
and notice  by the Markov property of the discretized forward process $(X_i)_i$ that 
\begin{align}\label{eq: Markov functions}
V_{i}^{(1)}  \; = \;  v_i^{(1)}(X_i),  & \quad \quad  \overline{\widehat{Z}_{i}}^{(1)} \; = \;  \widehat{z_i}^{(1)}(X_i), \quad i=0,\ldots,N, 
\end{align} 
for some  deterministic functions $v_i^{(1)}, \widehat{z_i}^{(1)}$.  Let us then  introduce 
\begin{align*}
\varepsilon_i^{1,y} \; := \;  \inf_{ \Uc_{}\in\Nc_i} \E \big|v_i^{(1)}(X_i) - \Uc_{}(X_i) \big|^2,  & \quad \quad  
\varepsilon_i^{1,z} \; := \;  \inf_{\Zc_{}\in\Nc_i'} \E \big| \widehat{z_i}^{(1)}(X_i) - \Zc_{}(X_i) \big|_{_2}^2,
%\end{cases}
\end{align*}
for $i$ $=$ $0,\ldots,N-1$, which represent the $L^2$-approximation errors of the functions $v_i^{(1)}, \widehat{z_i}^{(1)}$ in the classes  $\Nc_i$ and $\Nc'_i$.

\begin{Theorem}[Approximation error of MDBDP] \label{theoMDBDP} 
	Under Assumption \ref{H1},  there exists a constant $C> 0$ (depending only on the data $\mu,\sigma,f,g,d,T$) such that in the limit $ |\pi| \rightarrow 0$  
	\begin{align} 
	&  \sup_{i\in\llbracket 0,N \rrbracket}\ \E\big|Y_{t_i} - \widehat{\Uc}_{i}^{(1)}(X_{i})\big|^2 +  \E\Big[\sum_{i=0}^{N-1}\int_{t_i}^{t_{i+1}} \big|Z_s - \widehat{\Zc}^{(1)}_i(X_i)\big|_{_2}^2\ \di s\Big] \nonumber  \\
	& \leq \;  C \Big( \E\big|g(\Xc_T) - g(X_N) \big|^2   + |\pi|+ \varepsilon^Z(\pi) + \sum_{j=0}^{N-1} (\varepsilon_j^{1,y} + \Delta t_j \varepsilon_j^{1,z}) \Big). \label{estimMDBDP}
	\end{align}
\end{Theorem}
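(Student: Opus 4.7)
The plan is to carry out a two-level stability argument. First, I would isolate the \emph{network approximation error} by comparing the MDBDP output $(\widehat{\Uc}_i^{(1)},\widehat{\Zc}_i^{(1)})$ with the idealised $L^2$-projection pair $(v_i^{(1)},\widehat{z_i}^{(1)})$ implicitly defined in \eqref{defVi}. Then, in a second step, I would transfer the estimate back to a comparison with the true BSDE solution $(Y,Z)$ via a \emph{multistep backward stability} argument in the spirit of Gobet--Turkedjiev \cite{GT14}.

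For the first level, I would rewrite the MDBDP loss as $J_i^{MB}(\Uc,\Zc)=\E|F_i-\Uc(X_i)-f(t_i,X_i,\Uc(X_i),\Zc(X_i))\Delta t_i-\Zc(X_i)\cdot\Delta W_i|^2$, where $F_i$ denotes the multistep target (first two sums in \eqref{JMB}). The implicit relations \eqref{defVi} translate into the orthogonality identities $\E_i[R_i]=0$ and $\E_i[R_i\Delta W_i]=0$ for the residual $R_i:=F_i-V_i^{(1)}-f(t_i,X_i,V_i^{(1)},\overline{\widehat{Z}}_i^{(1)})\Delta t_i-\overline{\widehat{Z}}_i^{(1)}\cdot\Delta W_i$. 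Expanding $J_i^{MB}(\Uc,\Zc)$ around $(v_i^{(1)},\widehat{z_i}^{(1)})$ (writing $\delta\Uc=\Uc(X_i)-v_i^{(1)}(X_i)$, $\delta\Zc=\Zc(X_i)-\widehat{z_i}^{(1)}(X_i)$, and $f_\Uc$, $f_v$ for the drivers evaluated at the two respective pairs) produces the Pythagoras-type identity
\begin{equation*}
J_i^{MB}(\Uc,\Zc)-J_i^{MB}(v_i^{(1)},\widehat{z_i}^{(1)})=\E|\delta\Uc+(f_\Uc-f_v)\Delta t_i|^2+\Delta t_i\,\E|\delta\Zc|_2^2.
\end{equation*}
Young's inequality combined with $|f_\Uc-f_v|\le[f]_L(|\delta\Uc|+|\delta\Zc|_2)$ then shows that, for $|\pi|$ small enough, this right-hand side is equivalent, up to universal constants, to $\E|\delta\Uc|^2+\Delta t_i\E|\delta\Zc|_2^2$. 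Since $(\widehat{\Uc}_i^{(1)},\widehat{\Zc}_i^{(1)})$ minimises $J_i^{MB}$ over $\Nc_i\times\Nc_i'$, passing to the infimum in the upper bound and using the definitions of $\eps_i^{1,y},\eps_i^{1,z}$ yields the per-step estimate $\E|\widehat{\Uc}_i^{(1)}(X_i)-v_i^{(1)}(X_i)|^2+\Delta t_i\E|\widehat{\Zc}_i^{(1)}(X_i)-\widehat{z_i}^{(1)}(X_i)|_2^2 \le C(\eps_i^{1,y}+\Delta t_i\eps_i^{1,z})$.

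For the second level, I would start from $Y_{t_i}=\E_i[g(\Xc_T)-\int_{t_i}^T f(s,\Xc_s,Y_s,Z_s)\,ds]$ and subtract the implicit definition of $V_i^{(1)}$ in \eqref{defVi}. The discrepancy splits naturally into (i) the terminal error $g(\Xc_T)-g(X_N)$, (ii) an Euler time-discretization contribution of order $|\pi|$ (via Assumption \ref{H1} and Lemma \ref{lem: euler Lipschitz}), (iii) the $L^2$-regularity gap for $Z$ captured by $\eps^Z(\pi)$, and (iv) the driver mismatch $f(t_j,X_j,\widehat{\Uc}_j^{(1)},\widehat{\Zc}_j^{(1)})-f(t_j,X_j,v_j^{(1)},\widehat{z_j}^{(1)})$ for $j>i$, which the first level already controls. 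A parallel computation based on It\^o's isometry handles the gradient discrepancy $\overline{\widehat{Z}}_i^{(1)}-\bar Z_{t_i}$, and a discrete Gronwall lemma closes the induction. Combining with the triangle inequality and the projection bound $\E\int_{t_i}^{t_{i+1}}|Z_s-\bar Z_{t_i}|_2^2\,ds\le\eps^Z(\pi)$ then concludes \eqref{estimMDBDP}.

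The main obstacle will be the aggregation of future network errors in this second level. Unlike one-step DBDP, where only the neighbour $i+1$ enters the backward recursion, the multistep target $F_i$ contains \emph{all} future approximations, so a naive propagation would lose a factor $N$ when summing network errors. The crucial trick, borrowed from \cite{GT14}, is to group $\sum_{j>i}\{f(t_j,X_j,\widehat{\Uc}_j^{(1)},\widehat{\Zc}_j^{(1)})-f(t_j,X_j,v_j^{(1)},\widehat{z_j}^{(1)})\}\Delta t_j$ before squaring, so that Cauchy--Schwarz produces a harmless prefactor $\sum_{j>i}\Delta t_j\le T$, and to exploit the conditional orthogonality of the Brownian increments $\{\Delta W_j\}_{j>i}$ so that $\sum_{j>i}(\widehat{\Zc}_j^{(1)}-\widehat{z_j}^{(1)})(X_j)\cdot\Delta W_j$ becomes a sum of variances $\sum_{j>i}\Delta t_j\E|\widehat{\Zc}_j^{(1)}(X_j)-\widehat{z_j}^{(1)}(X_j)|_2^2$. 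Without this careful grouping one would pick up an $N$-dependent constant, contradicting the uniform bound announced in \eqref{estimMDBDP}.
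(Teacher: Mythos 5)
Your first level is correct and coincides with the paper's Step~1: the orthogonality identities $\E_i[R_i]=0$ and $\E_i[R_i\Delta W_i]=0$ are exactly what the martingale-representation argument behind \eqref{JtildeJ} delivers, and your Pythagoras identity combined with Young's inequality yields the per-step bound \eqref{eq: regression error}. The overall two-level architecture also matches the paper, which merely makes the intermediate processes explicit --- $\bar V_i^{(1)}$ from the implicit Euler scheme \eqref{defbarVi}, compared to $Y$ via \eqref{eq: time discretization error}, and the multistep aggregate $\hat V_i^{(1)}$ of \eqref{barViter} --- instead of comparing $Y_{t_i}$ to $V_i^{(1)}$ in one shot; that much is presentational.

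The gap is in your diagnosis and treatment of the ``main obstacle''. The term $\sum_{j>i}(\widehat{\Zc}_j^{(1)}-\widehat{z_j}^{(1)})(X_j)\cdot\Delta W_j$ that you propose to control by orthogonality of the Brownian increments never enters the second level: $Y_{t_i}$ (resp.\ $\bar V_i^{(1)}$) and $V_i^{(1)}$ are both conditional expectations given $\Fc_{t_i}$, so every martingale increment drops out of their difference; that orthogonality has already been consumed in level one. What actually threatens to produce an $N$-dependent (indeed exponential-in-$N$) constant is the accumulation of the discrepancies $\bar Z_j^{(1)}-\overline{\widehat{Z}_j}^{(1)}$ between the discretized-BSDE gradient and the scheme's ideal gradient, which enter the driver mismatch at \emph{every} $j\ge i$ since $f$ depends on $z$. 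The only handle on these is the conditional-variance identity \eqref{eq: Z propagation}, $\Delta t_j\,\E\big|\bar Z_j^{(1)}-\overline{\widehat{Z}_j}^{(1)}\big|_{_2}^2\le d\big(\E|\Delta_{j+1}|^2-\E|\E_j[\Delta_{j+1}]|^2\big)$ with $\Delta_{j+1}:=\bar V_{j+1}^{(1)}-\hat V_{j+1}^{(1)}$; if you drop the negative term and sum over $j$, you land on $dN\sup_j\E|\Delta_j|^2$ on the right of your Gronwall inequality and the argument collapses. The paper organizes the estimate as a one-step recursion \eqref{barVinter} precisely so that the $-\E|\E_j[\Delta_{j+1}]|^2$ cancels the leading $(1+\beta)\E|\E_j[\Delta_{j+1}]|^2$, leaving a $(1+C\Delta t_j)$ contraction, and the same cancellation is reused for the $Z$-error in \eqref{eq: propagation Z approximation}. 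Your ``group before squaring'' Cauchy--Schwarz with prefactor $T$ is fine for the network errors $\delta\Uc_j,\delta\Zc_j$, but it does nothing for $\bar Z_j^{(1)}-\overline{\widehat{Z}_j}^{(1)}$, and without the variance-cancellation step the plan does not close. (Incidentally, the true source of the improvement from the DBDP rate $N\sum_j\varepsilon_j^{3,y}$ to $\sum_j\varepsilon_j^{1,y}$ is structural rather than a summation trick: in the multistep target the network errors enter the backward propagation only through the driver, hence multiplied by $\Delta t_j$, which exactly compensates the $1/\beta=O(1/\Delta t_j)$ cost of Young's inequality.)
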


\begin{Remark}
	{\rm 
		The upper bound in \eqref{estimMDBDP} 
		consists of four terms. 
		The first three terms correspond to the time discretization of BSDE, similarly as in  \cite{Z04}, \cite{BT04},  
		namely (i) the strong approximation of the terminal condition (depending on the forward scheme and  $g$), and converging to zero, as $|\pi|$ goes to zero,  
		with a rate $|\pi|$ when $g$ is Lipschitz, (ii) the strong approximation of the forward Euler scheme, and the $L^2$-regularity of $Y$, which gives a convergence of order $|\pi|$, 
		(iii) the $L^2$-regularity of $Z$.  
		Finally, the last term is  the approximation error by  the chosen class of functions. 
		Note that the approximation error  $\sum_{j=0}^{N-1} (\varepsilon_j^{1,y} + \Delta t_j \varepsilon_j^{1,z})$ in \eqref{estimMDBDP} is better than the one for the  DBDP scheme derived in  \cite{HPW19}, 
		with an order $\sum_{j=0}^{N-1} (N\varepsilon_j^{1,y} + \varepsilon_j^{1,z})$.  In the work \cite{GT14} which introduced the multistep scheme with linear regression, the authors noticed the same improvement in the error propagation in comparison with the one-step classical scheme \cite{GLW05}.
	}
	\ep
\end{Remark}

\vspace{1mm}

We next study convergence for the approximation error of the MDBDP  scheme, for a specific choice of functions classes $\Nc_i$ and $\Nc'_i$ and with the additional assumption that $f$ does not depend on $z$.
\begin{Assumption} \label{H2}
		The generator function $f$ is independent of $z$. Namely, for all 
		$(t,x,y,z,z')$ $\in$ $[0,T]\times\R^d\times\R\times\R^d\times\R^d$,
		\begin{align*}
	f(t,x,y,z) = f(t,x,y,z').
		\end{align*}
\end{Assumption}

Actually, if $f$ is linear in $z$: $f(t,x,y,z)$ $=$ $\bar f(t,x,y) + \lambda(t,x).z$, one can boil down to Assumption \ref{H2} for $\bar f$ by incorporating the linearity in the drift function $\mu$, namely with the modified drift:  $\bar \mu(t,x) = \mu(t,x) - \sigma\lambda(t,x)$.

\begin{Proposition}[Rate of convergence of MDBDP]\label{cor: convergence MDBDP} 
	Let  Assumption \ref{H1} and Assumption \ref{H2} hold, and assume that $\Xc_0$ $\in$ $L^{2+\delta}(\Fc_0,\R^d)$,  for some $\delta$ $>$ $0$, and   $g$ is $[g]-$Lipschitz. 
	Then, there exists a bounded sequence $K_i$ (uniformly in $i,N$) such that for GroupSort neural networks classes $\Nc_i$ $=$ $\Gc_{K_i,d,1,\ell,m}^{\zeta_\kappa}$, and  $\Nc_i'$ $=$ $\Gc_{\sqrt{\frac{d}{\Delta t_i}}K_i,d,d,\ell,m}^{\zeta_\kappa}$, we have 
	\begin{align*}
	\sup_{i\in\llbracket0,N\rrbracket} \E\big|Y_{t_{i}}-\widehat{\Uc}^{(1)}_{i}(X_{i})\big|^2 +  \E\Big[\sum_{i=0}^{N-1}\int_{t_i}^{t_{i+1}} \big|Z_s - \widehat{\Zc}_i^{(1)}(X_i)\big|_{_2}^2\ \di s\Big]  
	& = \;  O(1/N), 
	\end{align*}
	with a grouping size $\kappa=O( 2\sqrt{d} N^{2})$, depth $\ell + 1 = O(d^2)$ and width $\sum_{i=0}^{\ell-1} m_i = O((2\sqrt{d} N^{2})^{d^2-1})$ in the case $d>1$. If $d=1$,  take $\kappa=O( N^{2})$, depth $\ell + 1 = 3$ and width $\sum_{i=0}^{\ell-1} m_i =O(N^{2}) $.
	Here, the constants in the $O(\cdot)$ term  depend only on  $\mu,\sigma,f,g$, $d,T,\Xc_0$.
\end{Proposition}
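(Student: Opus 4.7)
The strategy is to decompose the bound of Theorem \ref{theoMDBDP} and show each of the four terms on the right-hand side is $O(1/N)$. Since $g$ is $[g]$-Lipschitz, the classical Euler strong-error estimate gives $\E|g(\Xc_T) - g(X_N)|^2 \leq [g]^2\, \E|\Xc_T - X_N|^2 = O(|\pi|) = O(1/N)$; the mesh term $|\pi|$ is directly $O(1/N)$; and the $L^2$-regularity $\varepsilon^Z(\pi) = O(|\pi|)$ follows under Assumption \ref{H1} with $g$ Lipschitz from Zhang \cite{Z04}. Hence the work is to prove $\sum_{j=0}^{N-1}(\varepsilon_j^{1,y} + \Delta t_j \varepsilon_j^{1,z}) = O(1/N)$, i.e. that $\varepsilon_j^{1,y} = O(1/N^2)$ and $\Delta t_j \varepsilon_j^{1,z} = O(1/N^2)$ uniformly in $j$.

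The core step is to obtain uniform-in-$(i,N)$ Lipschitz bounds on the target functions $v_i^{(1)}$ and $\widehat{z_i}^{(1)}$, so that Proposition \ref{prop: tsb} can be applied with a uniformly bounded seed constant $K_i$. Using Assumption \ref{H2} to drop the $z$-argument in $f$, one rewrites \eqref{defVi} as the implicit equation
\[
v_i^{(1)}(x) \;=\; \Lambda_i(x) - f\big(t_i, x, v_i^{(1)}(x)\big)\Delta t_i,
\]
where $\Lambda_i(x) = \E\big[g(X_N^{i,x}) - \sum_{j=i+1}^{N-1} f(t_j, X_j^{i,x}, \widehat{\Uc}_j^{(1)}(X_j^{i,x}))\Delta t_j\big]$ and $X^{i,x}$ denotes the Euler scheme started from $X_i = x$. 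Combining Lemma \ref{lem: euler Lipschitz} with the Lipschitz hypotheses on $g$ and $f$ and the $K_j$-Lipschitz continuity of $\widehat{\Uc}_j^{(1)} \in \Nc_j$ for $j > i$ gives a Lipschitz bound for $\Lambda_i$ in terms of the $K_j$ plus a bounded baseline. Solving the implicit relation under the mild condition $[f]_L |\pi| < 1$ then produces a backward recursion $K_i^y \leq (1 + C\Delta t_i) K_{i+1}^y + C\Delta t_i$, which by discrete Gronwall is uniformly bounded in $i, N$; set $K_i := K_i^y$. For the gradient, expressing $\widehat{z_i}^{(1)}(x) = \E[\Delta W_i/\Delta t_i \cdot H_i(X_{i+1}^{i,x})]$ via the Markov property, where $H_i$ is a Lipschitz functional controlled in terms of the previously chosen $K_j$, a direct Cauchy--Schwarz estimate using $\E|\Delta W_i|_2^2/\Delta t_i^2 = d/\Delta t_i$ together with Lemma \ref{lem: euler Lipschitz} yields
\[
\big|\widehat{z_i}^{(1)}(x) - \widehat{z_i}^{(1)}(x')\big|_2 \;\leq\; \sqrt{d/\Delta t_i}\cdot [H_i]_L \cdot (1+C\Delta t_i)^{1/2}\,|x-x'|_2,
\]
so that $\widehat{z_i}^{(1)}$ is $\sqrt{d/\Delta t_i}\, K_i$-Lipschitz (up to relabeling $K_i$), matching the definition of $\Nc_i'$.

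With the Lipschitz bounds established, apply Proposition \ref{prop: tsb} on $[-R, R]^d$ with $\varepsilon = 1/N^2$, yielding the stated architecture: $\kappa = \lceil 2\sqrt{d}\, N^2 \rceil$, depth $O(d^2)$, width $O((2\sqrt{d}\, N^2)^{d^2-1})$ for $d > 1$ (and the analogous choice for $d = 1$). On the ball the squared pointwise errors are of order $R^2 K_i^2/N^4$ for $v_i^{(1)}$ and $R^2 K_i^2 d^2/(N^4 \Delta t_i)$ for $\widehat{z_i}^{(1)}$; multiplying the latter by $\Delta t_j$ restores order $R^2 K_i^2 d^2/N^4$. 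For the complement $\{|X_i|_\infty > R\}$, the uniform moment estimate $\sup_{i,N}\E|X_i|^{2+\delta} < \infty$ (standard for the Euler scheme when $\Xc_0 \in L^{2+\delta}$ under Assumption \ref{H1}) combined with the linear growth of both the targets and the GroupSort approximants (all Lipschitz with bounded value at $0$) gives, via H\"older's inequality, a tail contribution controlled by a negative power of $R$. Choosing $R = R(N)$ at an appropriate rate depending on $\delta$ balances the ball and tail parts, so that $\varepsilon_j^{1,y}$ and $\Delta t_j \varepsilon_j^{1,z}$ are of order $1/N^2$; summation in $j$ then gives the $O(1/N)$ bound for the neural network error.

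The main obstacle is the second paragraph: propagating the Lipschitz constants $K_i$ backward in time while maintaining the correct $\sqrt{d/\Delta t_i}$ scaling for $\widehat{z_i}^{(1)}$ and uniform boundedness. The Cauchy--Schwarz trick on $\Delta W_i$ gives the right scaling for free, but the Lipschitz constant $[H_i]_L$ of the Markov functional must be controlled using the $K_j$ chosen at subsequent times, which is what closes the discrete Gronwall loop. The remaining ingredients (moment-based tail estimates, choice of $R$, and assembly) follow standard patterns once the Lipschitz bounds are in place.
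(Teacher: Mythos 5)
Your proposal follows essentially the same route as the paper's proof: backward propagation of the Lipschitz constants of $v_i^{(1)}$ and $\hat v_i^{(1)}$ via the Euler flow (Lemma \ref{lem: euler Lipschitz}), Young's inequality to resolve the implicit relation, the Cauchy--Schwarz trick on $\Delta W_i/\Delta t_i$ for the $\sqrt{d/\Delta t_i}$ scaling of $\widehat{z_i}^{(1)}$, discrete Gronwall for uniform boundedness of $K_i$, and then Proposition \ref{prop: tsb} with $\epsilon = O(1/N^2)$ combined with a localization/tail argument to get $\varepsilon_j^{1,y} + \Delta t_j\varepsilon_j^{1,z} = O(1/N^2)$. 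The only cosmetic difference is the truncation radius: the paper's H\"older--Markov tail bound yields $O(1/R^2)$ uniformly in $\delta>0$, so $R=O(N)$ suffices without the $\delta$-dependent rate you suggest, but this does not affect correctness.
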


\subsection{Convergence of the DS Scheme}

We consider classes $\Nc_i^{\gamma,\eta}$ of differentiable $\gamma_i-$Lipschitz functions with $\eta_i-$Lipschitz derivative for sequences $\gamma$ $=$ $(\gamma_i)_i$, $\eta$ $=$ $(\eta_i)_i$ and define  $\widehat{\Uc}_i^{(2)}$ as the  output of the DS scheme at times $t_i$, $i$ $=$ $0,\ldots,N$. . 

\vspace{1mm}

Let us define the process 
\begin{align}  \label{defViDS} 
V_i^{(2)} & = \; \E_i\Big[\widehat{ \Uc}_{i+1}^{(2)}(X_{i+1}) - f\big(t_{i},X_i,\E_i[\widehat{\Uc}_{i+1}^{(2)}(X_{i+1})],\E_i[\sigma(t_i,X_i)\trans D_x[\widehat{\Uc}_{i+1}^{(2)}(X_{i+1})]\big)\Delta t_i \Big], 
\end{align} 
for $i$ $\in$ $\llbracket 0,N-1\rrbracket$, and $V_N^{(2)}$ $=$ $\widehat{\Uc}_N^{(2)}(X_N)$.  By the Markov property of $(X_i)_i$, we have $V_i^{(2)}$ $=$ $v_i^{(2)}(X_i)$, for some  functions 
$v_i^{(2)}$ $:$ $\R^d$ $\rightarrow$ $\R$, $i$ $\in$ $\llbracket 0,N-1\rrbracket$, and 
we introduce 
\begin{align*}
& \varepsilon_i^{\gamma,\eta} = \;  
\left\{
\begin{array}{ll} 
\inf_{\Uc \in \Nc_i^{\gamma,\eta}} \E\big| v_i^{(2)}(X_i) - \Uc(X_i) \big|^2, &  i=0,\ldots,N-1, \\
\inf_{\Uc \in \Nc_i^{\gamma,\eta}} \E\big| g(X_N) - \Uc(X_N) \big|^2, & i=N.
\end{array}%\; = \;   \E\big| g(X_N) - \widehat{\Uc}_N(X_N) \big|^2
\right. 
\end{align*}  
the $L^2$-approximation error in the class $\Nc_i^{\gamma,\eta}$ of the functions $v_i^{(2)}$, $i$ $=$ $0,\ldots,N-1$, and $g$.

\begin{Theorem}[Approximation error of DS]\label{theo: CV DS} 
	Let Assumption \ref{H1} hold, and assume that $\Xc_0$ $\in$ $L^4(\Fc_0,\R^d)$. Then, 
	there exists a constant $C> 0$ (depending only on $\mu,\sigma,f,g,d,T,\Xc_0$) such that in the limit $  |\pi| \rightarrow 0$  
	\begin{align}
	\sup_{i\in\llbracket0,N\rrbracket} \E\big|Y_{t_{i}}-\widehat{\Uc}_{i}^{(2)}(X_{i})\big|^2 
	& \leq C \Big( \E\big|g(X_N)-g(\Xc_T)\big|^2 + |\pi| +  \varepsilon^Z(\pi)  \nonumber  \\
	&  \quad \quad  \quad + \;   \max_i\big[\gamma_i^2, \eta_i^2 \big] |\pi| +   \varepsilon_N^{\gamma,\eta} +  N  \sum_{i=0}^{N-1} \varepsilon_i^{\gamma,\eta}  \Big).  \label{errorDS} 
	\end{align} 
\end{Theorem}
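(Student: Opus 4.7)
The overall strategy I would follow is the classical three-piece decomposition for BSDE schemes: write
$$Y_{t_i}-\widehat{\Uc}_i^{(2)}(X_i) \;=\; \bigl(Y_{t_i}-Y_i^\pi\bigr) + \bigl(Y_i^\pi - V_i^{(2)}\bigr) + \bigl(V_i^{(2)}-\widehat{\Uc}_i^{(2)}(X_i)\bigr),$$
where $Y_i^\pi$ is the standard time-discretised BSDE from \eqref{Ycond} and $V_i^{(2)}$ is the ``idealised'' DS target defined in \eqref{defViDS}. The first difference is controlled by standard Zhang/Bouchard--Touzi type estimates, which produces exactly the $\E|g(\Xc_T)-g(X_N)|^2 + |\pi| + \varepsilon^Z(\pi)$ terms in \eqref{errorDS}. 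The main work is on the remaining two differences.

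For the difference $V_i^{(2)}-\widehat{\Uc}_i^{(2)}(X_i)$, I would exploit that $\widehat{\Uc}_i^{(2)}$ is the minimiser of the loss $J_i^{S}$ in the class $\Nc_i^{\gamma,\eta}$. Conditioning on $X_i$ and using the orthogonal projection property, the squared distance in $L^2$ between the conditional expectation $V_i^{(2)}$ and the best network is nothing else than $\varepsilon_i^{\gamma,\eta}$, up to a stochastic noise term involving $\widehat{\Uc}_{i+1}^{(2)}(X_{i+1}) - \E_i[\widehat{\Uc}_{i+1}^{(2)}(X_{i+1})]$ and the Brownian increment. Here the point is that in the DS loss the surrogate for $Z_{t_i}$ is $\sigma(t_i,X_i)\trans D_x \widehat{\Uc}_{i+1}^{(2)}(X_{i+1})$, so the martingale-noise part can be expanded using the It\^o isometry to give a controlled $O(\Delta t_i \varepsilon_i^{\gamma,\eta})$ contribution.

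To pass from this local estimate to the backward error propagation, I would plug the one-step identity into the tower-expectation formula and iterate: the Lipschitz continuity of $f$ in $(y,z)$ combined with a discrete Gronwall argument, in the spirit of Section~4.4 of \cite{zhangBook}, produces the factor $N$ in front of $\sum_{i=0}^{N-1}\varepsilon_i^{\gamma,\eta}$, which is characteristic of the one-step (non-multistep) backward scheme, as opposed to the MDBDP improvement in Theorem~\ref{theoMDBDP}. The terminal contribution $\varepsilon_N^{\gamma,\eta}$ enters at $i=N$ from the initial step of the DS scheme.

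The most delicate piece, and the one I expect to be the main obstacle, is controlling the ``extra'' term $\max_i[\gamma_i^2,\eta_i^2]\,|\pi|$. This originates from the fact that the DS scheme approximates $Z_{t_i}$ by the automatic differentiation $\sigma(t_i,X_i)\trans D_x\widehat{\Uc}_{i+1}^{(2)}(X_{i+1})$ rather than by a direct regression. One must compare this object with $\bar Z_{t_i}$ using a discrete integration-by-parts (Malliavin-type) identity: writing $\E_i[\sigma\trans D_x\widehat{\Uc}_{i+1}^{(2)}(X_{i+1})]$ as $\E_i[\widehat{\Uc}_{i+1}^{(2)}(X_{i+1})\Delta W_i/\Delta t_i]$ up to a remainder of size $O(\gamma_{i+1}|\pi|^{1/2})$ from the Euler scheme \eqref{eq: Euler}, and controlling the Taylor remainder through the Lipschitz constant $\eta_{i+1}$ of the derivative. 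Combining these estimates with Lemma~\ref{lem: euler Lipschitz} and Assumption~\ref{H1}, together with the $L^4$ integrability of $\Xc_0$ to handle the product of $|X_{i+1}-X_i|$ terms, will yield the $\max_i[\gamma_i^2,\eta_i^2]\,|\pi|$ contribution. Gathering all these pieces, a final discrete Gronwall inequality delivers \eqref{errorDS}.
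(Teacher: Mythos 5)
Your overall skeleton matches the paper's proof: a three-term decomposition through a reference time-discretised BSDE (the paper uses the \emph{explicit} backward Euler scheme $\bar V_i^{(2)}$ rather than the implicit $Y_i^\pi$ of \eqref{Ycond}, an immaterial difference), Zhang-type estimates for the first piece, the minimiser property of $\widehat{\Uc}_i^{(2)}$ combined with the martingale representation theorem for the last piece, and a discrete Gronwall propagation that produces the factor $N$ in front of $\sum_i \varepsilon_i^{\gamma,\eta}$. However, your account of where the term $\max_i[\gamma_i^2,\eta_i^2]\,|\pi|$ comes from is misplaced, and the mechanism you propose for it would not work as stated. The discrete integration-by-parts identity
\begin{align}
\E_i\big[\sigma(t_i,X_i)\trans D_x\widehat{\Uc}_{i+1}^{(2)}(X_{i+1})\big] \; = \; \E_i\Big[\widehat{\Uc}_{i+1}^{(2)}(X_{i+1})\tfrac{\Delta W_i}{\Delta t_i}\Big]
\end{align}
is \emph{exact} for the Euler step (conditionally on $\Fc_{t_i}$, $X_{i+1}$ is an affine function of the Gaussian $\Delta W_i$; this is Lemma 2.1 of [FTW11], invoked in the paper's Step 2). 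There is no $O(\gamma_{i+1}|\pi|^{1/2})$ remainder to control there; in the paper this comparison with $\bar Z_i^{(2)}$ only produces a variance-difference term that is absorbed by the Young parameter $\beta=2d[f]_{_L}^2\Delta t_i$, contributing nothing involving $\gamma,\eta$.

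The actual source of $\max_i[\gamma_i^2,\eta_i^2]\,|\pi|$ is in the one-step loss analysis, which your sketch glosses over. Writing $J_i^S(\Uc_i)=\tilde J_i^S(\Uc_i)+\E\int_{t_i}^{t_{i+1}}|\widehat Z_s^{(2)}|_{_2}^2\,\di s$ via martingale representation (the martingale part is independent of $\Uc_i$ and simply drops out when comparing minimisers — it does not yield an $O(\Delta t_i\varepsilon_i^{\gamma,\eta})$ contribution as you suggest), one has $\tilde J_i^S(\Uc_i)=\E|V_i^{(2)}-\Uc_i(X_i)+\Delta f_i\Delta t_i|^2$, where $\Delta f_i$ is the mismatch between $f$ evaluated at $\big(X_{i+1},\widehat{\Uc}_{i+1}^{(2)}(X_{i+1}),\sigma\trans D_x\widehat{\Uc}_{i+1}^{(2)}(X_{i+1})\big)$ (as in the DS loss) and at the $\Fc_{t_i}$-conditional expectations defining $V_i^{(2)}$. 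Bounding $\E|\Delta f_i|^2$ requires exactly the Lipschitz constants $\gamma_i$ of $\widehat{\Uc}_{i+1}^{(2)}$ and $\eta_i$ of its derivative together with $\|X_{i+1}-X_i\|_{_4}^2=O(\Delta t_i)$ (this is where $\Xc_0\in L^4$ enters, via Cauchy--Schwarz against $|\sigma(t_i,X_i)|_{_2}^2$), giving $\E|\Delta f_i|^2\leq C\max[\gamma_i^2,\eta_i^2]\Delta t_i$ and hence a per-step bias $|\Delta t_i|^3\max[\gamma_i^2,\eta_i^2]$ in $\E|V_i^{(2)}-\widehat{\Uc}_i^{(2)}(X_i)|^2$. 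Multiplying by the Gronwall factor $N$ and summing over $i$ yields precisely $\max_i[\gamma_i^2,\eta_i^2]\,|\pi|$. So the ingredients you list are the right ones, but they must be deployed in the local regression step, not in the comparison between the automatic-differentiation surrogate and $\bar Z_i^{(2)}$.
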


\vspace{1mm}

\begin{Remark}
	{\rm
		We retrieve a similar error as in the analysis of the  DBDP2 scheme  derived in \cite{HPW19}. Notice that when $g$ is $C^1$, one can choose to initialize  the DS scheme with $\widehat{\Uc}_N$ $=$ $g$, and the term $\varepsilon_N^{\gamma,\eta}$ is removed in \eqref{errorDS}. 
	}
	\ep
\end{Remark}

The GroupSort neural networks being only continuous but not differentiable, we are not able to express a convergence rate for the Deep Splitting scheme in terms of the architecture and number of neurons to choose, like in Propositions \ref{cor: convergence MDBDP}, \ref{cor: convergence DBDP}. It would require a quantitative approximation result for $C^1$ neural networks with bounded Lipschitz gradient, and this is left to future research.

\subsection{ Convergence of the DBDP Scheme}\label{sec: convergence one step}

We consider classes of  functions $\Nc_i$ and $\Nc_i'$
for the approximations of the solution and its gradient, and define  $(\widehat{\Uc}_i^{(3)},\widehat{\Zc}_i^{(3)})$ as the  output of the DBDP scheme at times $t_i$, $i$ $=$ $0,\ldots,N$. 
\vspace{1mm}
Let us define (implicitly)  the  process 
\begin{equation} 
\left\{
\begin{array}{ccl}
V_{i}^{(3)}  & = & \E_i\Big[\widehat{\Uc}_{i+1}^{(3)}(X_{i+1} )-  f\big(t_{i},X_i ,V_{i}^{(3)} ,\overline{\widehat{Z}_{i}}^{(3)} \big)\Delta t_i \Big]\\
\overline{\widehat{Z}_{i}}^{(3)}  & = &  \E_i\Big[\widehat{\Uc}_{i+1}^{(3)}(X_{i+1} ) \frac{\Delta W_i}{\Delta t_i}\Big], \quad i = k,\ldots,N-1.
\end{array}
\right.
\end{equation}
and notice  by the Markov property of the discretized forward process $(X_i)_i$ that 
\begin{align}\label{eq : Markov functions 3}
V_{i}^{(3)}  \; = \;  v_i^{(3)}(X_i),  & \quad \quad  \overline{\widehat{Z}_{i}} \; = \;  \widehat{z_i}^{(3)}(X_i), \quad i=0,\ldots,N, 
\end{align} 
for some  deterministic functions $v_i^{(3)}, \widehat{z_i}^{(3)}$.  Let us then  introduce 
\begin{align*}
\varepsilon_i^{3,y} \; := \;  \inf_{ \Uc_{}\in\Nc_i} \E \big|v_i^{(3)}(X_i) - \Uc_{}(X_i) \big|^2,  & \quad \quad  
\varepsilon_i^{3,z} \; := \;  \inf_{\Zc_{}\in\Nc_i'} \E \big| \widehat{z_i}^{(3)}(X_i) - \Zc_{}(X_i) \big|_{_2}^2,
\end{align*}
for $i$ $=$ $0,\ldots,N-1$, which represent the $L^2$-approximation errors of the functions $v_i^{(3)}, \widehat{z_i}^{(3)}$ in the classes  $\Nc_i$ and $\Nc_i'$.  
\begin{Theorem}[Huré, Pham, Warin \cite{HPW19} : Approximation error of DBDP] \label{theoDBDP} 
	Under Assumption \ref{H1},  there exists a constant $C> 0$ (depending only on the data $\mu,\sigma,f,g,d,T$) such that in the limit $ |\pi| \rightarrow 0$  
	\begin{align} 
	&  \sup_{i\in\llbracket 0,N \rrbracket}\ \E\big|Y_{t_i} - \widehat{\Uc}_{i}^{(3)}(X_{i})\big|^2 +  \E\Big[\sum_{i=0}^{N-1}\int_{t_i}^{t_{i+1}} \big|Z_s - \widehat{\Zc}_i^{(3)}(X_i)\big|_{_2}^2\ \di s\Big] \nonumber  \\
	& \leq \;  C \Big( \E\big|g(\Xc_T) - g(X_N) \big|^2   + |\pi|+ \varepsilon^Z(\pi) + N\sum_{j=0}^{N-1} (\varepsilon_j^{3,y} + \Delta t_j \varepsilon_j^{3,z}) \Big). \label{estimDBDP}
	\end{align}
\end{Theorem}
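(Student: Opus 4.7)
The statement is from \cite{HPW19}, and I would reproduce their argument, which runs in complete parallel to the proof of Theorem \ref{theoMDBDP} for the multistep scheme and is actually slightly simpler. The plan is to split by the triangle inequality
\begin{equation*}
\E\big|Y_{t_i}-\widehat{\Uc}_i^{(3)}(X_i)\big|^2 \;\leq\; 2\,\E\big|Y_{t_i}-Y_i^\pi\big|^2 + 2\,\E\big|Y_i^\pi-\widehat{\Uc}_i^{(3)}(X_i)\big|^2,
\end{equation*}
and similarly for the gradient, into a classical BSDE time-discretization error and a scheme-approximation error. The first is handled by Zhang's bound (\cite{Z04,BT04}) and produces the terms $\E|g(\Xc_T)-g(X_N)|^2$, $|\pi|$, and $\eps^Z(\pi)$. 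The second is propagated backward by induction.

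For the backward induction, the core step is a Pythagorean decomposition of the local loss $J_i^{(B1)}$. Since $v_i^{(3)}(X_i)$ and $\widehat{z}_i^{(3)}(X_i)$ are by construction the unconstrained $L^2$-minimizers of the corresponding losses, one shows, after absorbing Lipschitz cross-terms from $f$ via Young's inequality valid for small $|\pi|$,
\begin{equation*}
\E\big|v_i^{(3)}(X_i)-\widehat{\Uc}_i^{(3)}(X_i)\big|^2 + \Delta t_i\,\E\big|\widehat{z}_i^{(3)}(X_i)-\widehat{\Zc}_i^{(3)}(X_i)\big|_2^2 \;\leq\; C\big(\eps_i^{3,y}+\Delta t_i\,\eps_i^{3,z}\big).
\end{equation*}
The second ingredient is a one-step BSDE stability estimate: subtract the implicit equation defining $v_i^{(3)}$ from the discrete BSDE \eqref{Yinduc}, take $\E_i$, and use the Lipschitz property of $f$; multiplying the same identity by $\Delta W_i/\Delta t_i$ and invoking Itô isometry delivers the analogous estimate for the gradient, yielding
\begin{equation*}
\E\big|Y_i^\pi-v_i^{(3)}(X_i)\big|^2 + \Delta t_i\,\E\big|Z_i^\pi-\widehat{z}_i^{(3)}(X_i)\big|_2^2 \;\leq\; (1+C\Delta t_i)\,\E\big|Y_{i+1}^\pi-\widehat{\Uc}_{i+1}^{(3)}(X_{i+1})\big|^2.
\end{equation*}

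Setting $E_i := \E|Y_i^\pi-\widehat{\Uc}_i^{(3)}(X_i)|^2$ and combining these two estimates by a Young splitting through $v_i^{(3)}(X_i)$ with parameter proportional to $\Delta t_i$, I obtain
\begin{equation*}
E_i \;\leq\; (1+C\Delta t_i)\,E_{i+1} + C\,N\,\eps_i^{3,y} + C\,\eps_i^{3,z},
\end{equation*}
and a discrete backward Gronwall from $E_N=0$ gives $\sup_i E_i \leq C\sum_j(N\,\eps_j^{3,y}+\eps_j^{3,z})$, which on a regular grid is exactly $CN\sum_j(\eps_j^{3,y}+\Delta t_j\,\eps_j^{3,z})$. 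The $Z$-estimate follows by summing in $i$ the gradient part of the stability inequality and combining with the Pythagorean bound, yielding the time-integrated contribution appearing on the left of \eqref{estimDBDP}.

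The main obstacle, and the origin of the factor $N$ compared to Theorem \ref{theoMDBDP}, is precisely the Young weight of order $\Delta t_i$ used to recombine the two pieces. It is forced by the asymmetric weighting (unit on the $y$-component, $\Delta t_i$ on the $z$-component) in the Pythagorean identity for $J_i^{(B1)}$, and cannot be avoided for a one-step scheme. In MDBDP, the global-in-time aggregation of the stochastic integral inside the loss circumvents this rescaling and produces $\sum_j(\eps_j^{1,y}+\Delta t_j\,\eps_j^{1,z})$ instead. A secondary technical subtlety is the Itô-isometry computation for the gradient, which places $\Delta t_i$ symmetrically on both sides of the $Z$-inequality so that summation in $i$ yields an $L^2([0,T])$-type bound without generating a second factor of $N$.
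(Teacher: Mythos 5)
Your proposal is correct and follows essentially the same route as the source: the paper itself imports Theorem \ref{theoDBDP} from \cite{HPW19} without reproving it, and your decomposition (time-discretization error, Pythagorean analysis of the local loss giving the per-step bound $C(\eps_i^{3,y}+\Delta t_i\eps_i^{3,z})$, one-step stability, Young recombination with weight $\sim\Delta t_i$, discrete Gronwall) is exactly the argument of \cite{HPW19} and mirrors the in-paper proof of Theorem \ref{theoMDBDP}. Your diagnosis of where the extra factor $N$ arises relative to the multistep scheme is also accurate.
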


\vspace{2mm}

We next study convergence rate for the approximation error of the DBDP scheme, and need to specify the class of network functions $\Nc_i$ and $\Nc_i'$. 
\begin{Proposition}[Rate of convergence of DBDP]\label{cor: convergence DBDP} 
	Let  Assumption \ref{H1} hold, and assume that $\Xc_0$ $\in$ $L^{2+\delta}(\Fc_0,\R^d)$,  for some $\delta$ $>$ $0$, and   $g$ is $[g]-$Lipschitz. Then, there exists a bounded sequence $K_i$ (uniformly in $i,N$) such that for $\Nc_i$ $=$ $\Gc_{K_i,d,1,\ell,m}^{\zeta_\kappa}$, and  $\Nc_i'$ $=$ $\Gc_{\sqrt{\frac{d}{\Delta t_i}}K_i,d,d,\ell,m}^{\zeta_\kappa}$, we have 
	\begin{align*}
	\sup_{i\in\llbracket0,N\rrbracket} \E\big|Y_{t_{i}}-\widehat{\Uc}_{i}^{(3)}(X_{i})\big|^2 +  \E\Big[\sum_{i=0}^{N-1}\int_{t_i}^{t_{i+1}} \big|Z_s - \widehat{\Zc}_i^{(3)}(X_i)\big|_{_2}^2\ \di s\Big]  
	& = \;  O(1/N), 
	\end{align*} with a grouping size $\kappa = O( 2\sqrt{d} N^{3})$, depth $\ell + 1 = O(d^2)$ and width $\sum_{i=0}^{\ell-1} m_i = O((2\sqrt{d} N^{3})^{d^2-1})$ in the case $d>1$. If $d=1$,  take $\kappa=O( N^{3})$, depth $\ell + 1 = 3$ and width $\sum_{i=0}^{\ell-1} m_i =O(N^{3}) $.
	Here, the constants in the $O(\cdot)$ term  depend only on  $\mu,\sigma,f,g$, $d,T,\Xc_0$.  
\end{Proposition}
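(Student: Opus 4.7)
The plan is to combine the error bound of Theorem \ref{theoDBDP} with the GroupSort approximation result of Proposition \ref{prop: tsb}, after establishing uniform Lipschitz regularity of the implicit Markov functions $v_i^{(3)}$ and $\widehat z_i^{(3)}$ from \eqref{eq : Markov functions 3}. Theorem \ref{theoDBDP} bounds the left-hand side by
\[
C\bigl(\E|g(\Xc_T)-g(X_N)|^2 + |\pi| + \varepsilon^Z(\pi) + N\Sum_{j=0}^{N-1}(\varepsilon_j^{3,y}+\Delta t_j\,\varepsilon_j^{3,z})\bigr).
\]
Under Lipschitz $g$, the first three terms are $O(|\pi|)=O(1/N)$ by the standard strong Euler estimate and the recalled $L^2$-regularity of $Z$. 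Hence it suffices to show $\varepsilon_j^{3,y}=O(N^{-3})$ and $\varepsilon_j^{3,z}=O(N^{-2})$ uniformly in $j$, which makes the weighted sum of order $1/N$.

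The core step is to prove by backward induction on $i$ that $v_i^{(3)}$ is $K_i$-Lipschitz and $\widehat z_i^{(3)}$ is $\sqrt{d/\Delta t_i}\,K_i$-Lipschitz, with $(K_i)$ bounded uniformly in $(i,N)$. The base case $v_N^{(3)}=g$ is $[g]$-Lipschitz. For the inductive step, $\widehat\Uc_{i+1}^{(3)}\in\Gc_{K_{i+1},d,1,\ell,m}^{\zeta_\kappa}$ is $K_{i+1}$-Lipschitz by construction of the class; Cauchy--Schwarz applied to the identity $\widehat z_i^{(3)}(x)-\widehat z_i^{(3)}(x')=\E\bigl[(\widehat\Uc_{i+1}^{(3)}(X_{i+1}^x)-\widehat\Uc_{i+1}^{(3)}(X_{i+1}^{x'}))\Delta W_i/\Delta t_i\bigr]$, together with Lemma \ref{lem: euler Lipschitz} and $\E|\Delta W_i|^2=d\Delta t_i$, yields the announced $\sqrt{d/\Delta t_i}$-scaled Lipschitz bound. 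Inserting this into the implicit equation for $v_i^{(3)}$ with $f$ Lipschitz produces a discrete recursion on $K_i$ that a weighted discrete Gronwall argument closes to a uniform bound.

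With this regularity, on the box $[-R_N,R_N]^d$ I apply Proposition \ref{prop: tsb} to approximate $v_i^{(3)}$ by an element of $\Gc_{K_i,d,1,\ell,m}^{\zeta_\kappa}$ with sup-error $\leq 2R_NK_i\varepsilon$, and analogously $\widehat z_i^{(3)}$ in $\Gc_{\sqrt{d/\Delta t_i}K_i,d,d,\ell,m}^{\zeta_\kappa}$. Outside the box, the common Lipschitz growth of target and approximant bounds the squared error by $C(1+|X_i|_2^2)$, and the uniform moment bound $\E|X_i|^{2+\delta}\leq C$ (inherited from $\Xc_0\in L^{2+\delta}$ and Assumption \ref{H1}) combined with Markov's inequality controls the tail by $CR_N^{-\delta}$. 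Hence $\varepsilon_j^{3,y}\leq C(R_N^2\varepsilon^2+R_N^{-\delta})$; balancing $R_N=\Theta(N^{3/\delta})$ and $\varepsilon=\Theta(N^{-3/2-3/\delta})$ enforces $\varepsilon_j^{3,y}=O(N^{-3})$, and the analogous computation on $\widehat z_i^{(3)}$ gives $\varepsilon_j^{3,z}=O(N^{-2})$. Feeding $\varepsilon$ into the architecture read-off of Proposition \ref{prop: tsb} (the $\delta$-dependent constants absorbed in the $O$) yields the announced grouping size $\kappa=O(2\sqrt d\,N^3)$, depth $O(d^2)$ and width $O((2\sqrt d\,N^3)^{d^2-1})$ for $d>1$, and the simpler $d=1$ formulas.

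The main obstacle is the Lipschitz induction: the $z$-dependence of $f$ injects a term of order $[f]_L\Delta t_i\cdot\sqrt{d/\Delta t_i}\,K_{i+1}=[f]_L\sqrt{d\Delta t_i}\,K_{i+1}$ into the $K_i$ recursion, whose naive aggregation would blow $K_i$ up as $\exp(C\sqrt N)$; avoiding this requires the carefully weighted backward Gronwall sweep that the MDBDP analysis could sidestep via Assumption \ref{H2}.
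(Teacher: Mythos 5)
Your overall strategy matches the paper's: establish uniform Lipschitz bounds for $v_i^{(3)}$ and $\widehat z_i^{(3)}$ by backward induction, apply Proposition \ref{prop: tsb} on a ball with a tail estimate, and feed $\varepsilon_j^{3,y}$, $\Delta t_j\varepsilon_j^{3,z}=O(N^{-3})$ into Theorem \ref{theoDBDP}. But there is a genuine gap exactly at the point you flag in your last paragraph. Inserting the crude bound $[\widehat z_i^{(3)}]\le\sqrt{d/\Delta t_i}\,K_{i+1}(1+C\Delta t_i)$ into the implicit equation for $v_i^{(3)}$ yields a recursion of the form $K_i\le(1+C\sqrt{\Delta t_i})K_{i+1}$, and no weighted discrete Gronwall argument can close this: the product $\prod_i(1+C\sqrt{T/N})$ diverges like $e^{C\sqrt{TN}}$, so the recursion itself is too lossy, not merely its aggregation. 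The paper's resolution is not a Gronwall refinement but a cancellation: centering before Cauchy--Schwarz gives the sharper estimate $\Delta t_i\,\E\big|\overline{\widehat{Z}}_{i,3}^{x}-\overline{\widehat{Z}}_{i,3}^{x'}\big|_2^2\le d\big(\E|\Delta\Uc_{i+1}|^2-\E|\E_i[\Delta\Uc_{i+1}]|^2\big)$ with $\Delta\Uc_{i+1}:=\widehat{\Uc}_{i+1}^{(3)}(X_{i+1}^{i,x})-\widehat{\Uc}_{i+1}^{(3)}(X_{i+1}^{i,x'})$, and the Young parameter is then chosen as $\gamma=3[f]^2d$ precisely so that the negative term $-\E|\E_i[\Delta\Uc_{i+1}]|^2$ absorbs the leading conditional-expectation term; this leaves $[v_i^{(3)}]^2\le(1+\hat C\Delta t_i)([\widehat{\Uc}_{i+1}^{(3)}]^2+\Delta t_i)$, which does close by Gronwall. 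You name the obstacle but do not supply this mechanism, and without it the uniform boundedness of $K_i$ --- the heart of the proposition --- is unproved.

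A secondary issue: your tail bound $\varepsilon_j^{3,y}\le C(R_N^2\varepsilon^2+R_N^{-\delta})$ forces $R_N=\Theta(N^{3/\delta})$ and $\varepsilon=\Theta(N^{-3/2-3/\delta})$, so the grouping size and width read off from Proposition \ref{prop: tsb} scale like a power of $N$ that depends on $\delta$; this matches the announced $N^3$ only when $\delta=2$ and is strictly worse for $\delta<2$, and an exponent cannot be absorbed into the $O(\cdot)$ constant. The paper instead applies H\"older with exponent $2q=2+\delta$ to the indicator of the tail event and Markov's inequality, obtaining a tail contribution of order $1/R^2$ independently of $\delta$, which permits the $\delta$-free choice $R=O(N^{3/2})$, $\varepsilon=O(N^{-3})$ and hence the stated architecture.
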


\section{Proof of the Main Theoretical  Results} \label{sec:proof}

\subsection{Proof of Theorem \ref{theoMDBDP}}

Let us introduce the processes $(\bar V_i,\bar Z_i)_i$ arising from the time discretization of the BSDE \eqref{BSDEfeyn}, and defined by the {\it implicit} backward Euler scheme:  
\begin{equation} \label{defbarVi} 
\left\{
\begin{array}{ccl}
\bar V_{i}^{(1)} &= &  \E_i\Big[ \bar{V}_{i+1}^{(1)} - f\big(t_{i},X_i,\bar V_{i}^{(1)},\bar Z_i^{(1)}\big)\Delta t_i\Big]\\
\bar Z_{i}^{(1)} &= & \E_i\Big[\bar{V}_{i+1}^{(1)} \frac{\Delta W_i}{\Delta t_i} \Big], \quad i=0,\ldots,N-1,  
\end{array} 
\right.
\end{equation}
starting from $\bar V_N^{(1)}$ $=$ $g(X_N)$. We recall from \cite{Z04} the time discretization error:  
\begin{align}
& \sup_{i \in \llbracket0,N \rrbracket}   \E\big|Y_{t_i} -  \bar{V_{i}}^{(1)}\big|^2 + 
\E\Big[\sum_{i=0}^{N-1} \int_{t_i}^{t_{i+1}} \big|Z_s - \bar Z_i^{(1)}\big|_{_2}^2\ \di s\Big] \nonumber 
\\ & \leq   C \Big( \E\big|g(\Xc_T) - g(X_N) \big|^2 + |\pi|  + \varepsilon^Z(\pi) \Big),  \label{eq: time discretization error}
\end{align} 
for some constant $C$  depending only on the coefficients satisfying Assumption \ref{H1}.

Let us introduce the auxiliary  process
\begin{align} \label{barViter}
\hat{V}_{i}^{(1)} &= \;  \E_i\Big[g(X_N) - \Sum_{j=i}^{N-1} f\big(t_{j},X_j,\widehat{\Uc}_{j}^{(1)}(X_{j}),\widehat{\Zc}_{j}^{(1)}(X_{j})\big)\Delta t_j \Big], \quad i=0,\ldots,N, 
\end{align}
and notice by the tower property of conditional expectations that we have the recursive relations:
\begin{align}\label{eq: one step representation}
\hat{V_{i}}^{(1)} &= \; \E_i\Big[\hat{V}_{i+1}^{(1)} -  f\big(t_{i},X_i,\widehat{\Uc}_{i}^{(1)}(X_{i}),\widehat{\Zc}_{i}^{(1)}(X_{i})\big) \Delta t_i \Big], \quad  i =0,\ldots,N-1. 
\end{align} 
Observe also that  $\overline{\widehat{Z}}_i^{(1)}$ defined in \eqref{defVi} satisfies
\begin{align} \label{eq: one step representationZ}
\overline{\widehat{Z}_{i}}^{(1)} &= \; \E_i\Big[\hat{V}_{i+1}^{(1)} \frac{\Delta W_i}{\Delta t_i}\Big], \quad i =0,\ldots,N-1.
\end{align}

We now decompose the approximation error, for  $i\in\llbracket0,N-1\rrbracket$,  into  
\begin{align}
& \E\big|Y_{t_i} - \widehat{\Uc}_{i}^{(1)}(X_{i})\big|^2 \\ & \leq \;  
4\Big(\E\big|Y_{t_i} -  \bar{V_{i}}^{(1)} \big|^2 + \E\big| \bar{V_{i}}^{(1)} - \hat{V}_{i}^{(1)}\big|^2  + \E\big|\hat{V}_{i}^{(1)} - V_{i}^{(1)}\big|^2 + \E\big|V_{i}^{(1)} - \hat{\Uc}_{i}^{(1)}(X_{i})\big|^2\Big) \nonumber  \\
& = : \; 4(I_i^1 + I_i^2 + I_i^3 + I_i^4), \label{eq: error decomposition}
\end{align} 
and analyze each of these contribution terms. In the sequel,  $C$ denotes a generic constant independent of $\pi$ that may vary from line to line,  and depending only on the coefficients satisfying Assumption \ref{H1}.  
Notice that the first contribution term is  the time discretization error for BSDE given by \eqref{eq: time discretization error}, and we shall study the three other terms in the following steps.   

\vspace{1mm} 

\noindent \textit{\underline{Step 1.}}  Fix $i$ $\in$ $\llbracket0,N-1\rrbracket$. 
From the definition \eqref{defVi} of $V_i^{(1)}$ and by the martingale representation theorem,  
there exists a square integrable process $\{\widehat{Z}_s^{(1)},t_i\leq s \leq T\}$ s.t. 
\begin{align}
& g(X_N) - f\big(t_{i},X_i,V_{i}^{(1)},\overline{\widehat{Z}_{i}^{(1)}}\big)\Delta t_i 
- \sum_{j=i+1}^{N-1} f\big(t_{j},X_j,\widehat{\Uc}_{j}^{(1)}(X_{j}),\widehat{\Zc}_{j}^{(1)}(X_{j})\big)\Delta t_j  \nonumber \\ & = \;  V_i + \int_{t_i}^{t_N} \widehat{Z}_s^{(1)}.  \di W_s.  \label{decmartin} 
\end{align}
From the definition \eqref{defVi} of $\overline{\widehat{Z}_{i}}^{(1)}$, and  by  It\^o isometry, we then have 
\begin{align} \label{decZ} 
\overline{\widehat{Z}_{i}}^{(1)} &= \;  \frac{ \E_i\big[\int_{t_i}^{t_{i+1}} \widehat{Z}_s^{(1)}  \di s \big]}{ \Delta t_i},  
\;  \mbox{ i.e. }  \; \E_i\big[  \int_{t_i}^{t_{i+1}} \big( \widehat{Z}_s^{(1)} -  \overline{\widehat{Z}_{i}}^{(1)} \big)  \di s \big] \; = \;  0.    
\end{align}
Plugging \eqref{decmartin}  into  \eqref{JMB}, we see that the loss function of the MDBDP scheme can be rewritten as 
\begin{align}
& \Jc_i^{MB}(\Uc_i,\Zc_i) \\ &=  \;  \E\Big|V_i^{(1)} - \Uc_{i}(X_{i})   + \Delta t_i \big[ f\big(t_{i},X_i,V_i^{(1)},\overline{\widehat{Z}_{i}}^{(1)}\big)
-  f\big(t_{i},X_i,\Uc_{i}(X_{i}),\Zc_{i}(X_{i})\big)  \big]  \nonumber   \\ 
& \quad \quad  + \;   \sum_{j=i+1}^{N-1} \int_{t_j}^{t_{j+1}} \big[ \widehat{Z}_s^{(1)} - \widehat{\Zc}_{j}(X_{j})\big]. \di W_s  + 
\int_{t_i}^{t_{i+1}} \big[ \widehat{Z}_s^{(1)} - \Zc_{i}(X_{i}) \big] . \di W_s \Big|^2 \nonumber \\
& =   \widetilde{\Jc}_i^{MB}(\Uc_i,\Zc_i) +  \E \Big[ \sum_{j=i}^{N-1} \int_{t_j}^{t_{j+1}} \big| \widehat{Z}_s^{(1)} - \overline{\widehat{Z}_j}^{(1)} \big|_{_2}^2\di s \Big] \nonumber 
\\ &  \quad +  \sum_{j=i+1}^{N-1} \Delta t_j \E \big| \overline{\widehat{Z}_{j}}^{(1)} - \widehat{\Zc}_{j}(X_{j}) \big|_{2}^2,  \label{JtildeJ} 
\end{align} 
where we use \eqref{decZ}, and 
\begin{align*}
& \widetilde{\Jc}_i^{MB}(\Uc_i,\Zc_i)\\  &:=  \E\Big|V_i^{(1)} - \Uc_{i}(X_{i}) +\Delta t_i\big[ f\big(t_{i},X_i,V_i^{(1)},\overline{\widehat{Z}_{i}}^{(1)}\big) - f\big(t_{i},X_i,\Uc_{i}(X_{i}),\Zc_{i}(X_{i})\big) \big] \Big|^2 \\
& \quad  \quad + \;  \Delta t_i \E\big| \overline{\widehat{Z}_{i}}^{(1)} - \Zc_{i}(X_{i})\big|_{_2}^2. 
\end{align*}
It is clear by Lipschitz continuity of $f$ in Assumption \ref{H1} that 
\begin{align} \label{interJtilde}
\widetilde{\Jc}_i^{MB}(\Uc_i,\Zc_i)  & \leq \;  C\Big( \E\big|V_i^{(1)} - \Uc_{i}(X_{i})\big|^2 + \Delta t_i \E\big| \overline{\widehat{Z}_{i}}^{(1)} - \Zc_{i}(X_{i})\big|_{_2}^2\Big).
\end{align}
On the other hand, by the Young inequality:  $(1 - \beta) a^2$ $+$ $\big(1 - \frac{1}{\beta}\big) b^2$ $\leq$  
$(a+b)^2$ $\leq$ $(1+\beta)a^2$ $+$ $\big(1 + \frac{1}{\beta} \big) b^2$, for all $(a,b)$ $\in$ $\R^2$, and  $\beta >0$, we have 
\begin{align}
& \widetilde{\Jc}_i^{MB}(\Uc_i,\Zc_i)\\  &\geq \; (1-\beta) \E\big|V_i^{(1)} - \Uc_{i}(X_{i})\big|^2 + \;  \Delta t_i \E\big| \overline{\widehat{Z}_{i}}^{(1)} - \Zc_{i}(X_{i})\big|_{_2}^2 \nonumber \\ 
&+ \quad \quad \Big(1- \frac{1}{\beta}\Big) |\Delta t_i|^2 
\E\big|f\big(t_{i},X_i,\Uc_{i}(X_{i}),\Zc_{i}(X_{i})\big) - f\big(t_{i},X_i,V_i^{(1)},\overline{\widehat{Z}_{i}}^{(1)}\big) \big|^2    \nonumber \\ 
& \geq  \; (1-\beta) \E\big|V_i^{(1)} - \Uc_{i}(X_{i})\big|^2 + \;  \Delta t_i\E\big| \overline{\widehat{Z}_{i}}^{(1)} - \Zc_{i}(X_{i})\big|_{_2}^2  \nonumber   \\ 
& \quad \quad -  \frac{2[f]_{_L}^2}{\beta}  |\Delta t_i|^2 \Big(\E\big|\Uc_{i}(X_{i})-V_i^{(1)}\big|^2 
+ \E\big|\Zc_{i}(X_{i})-\overline{\widehat{Z}_{i}}^{(1)} \big|_{_2}^2\Big)  \nonumber \\          
& \geq \;  \Big(1-\big(4 [f]_{_L}^2+\frac{1}{2} \big)\Delta t_i\Big) \E\big|V_i^{(1)} - \Uc_{i}(X_{i})\big|^2 + \frac{1}{2}\Delta t_i\E\big| \overline{\widehat{Z}_{i}}^{(1)} - \Zc_{i}(X_{i})\big|_{_2}^2, \label{tildeJlower} 
\end{align}
where we use the Lipschitz continuity of $f$ in the second inequality, and choose explicitly $\beta = 4 [f]_{_L}^2 \Delta t_i $ ($<$ $1$ for $\Delta t_i $ small enough) in the last one.  
By applying inequality \eqref{tildeJlower} to $(\Uc_i,\Zc_i)$ $=$ $(\widehat{\Uc}_i^{(1)},\widehat{\Zc}_i^{(1)})$, which  is a minimizer of $\tilde{\Jc}_i^{MB}$ by \eqref{JtildeJ}, and combining with  
\eqref{interJtilde}, this yields  for $\Delta t_i$ small enough and for all functions $\Uc_{i}$, $\Zc_{i}$: 
\begin{align*}
& \E\big|V_i^{(1)} - \widehat{\Uc}_{i}^{(1)}(X_{i})\big|^2 + \Delta t_i\E\big| \overline{\widehat{Z}_{i}}^{(1)} - \widehat{\Zc}_{i}^{(1)}(X_{i})\big|_{_2}^2 \\ & \leq \;  
C\Big(\E\big| V_i - \Uc_{i}(X_{i}) \big|^2 + \Delta t_i \E\big| \overline{\widehat{Z}_{i}} - \Zc_{i}(X_{i})\big|_{_2}^2\Big).    
\end{align*}    
By minimizing over $\Uc_{i},\Zc_{i}$ in the right hand side, we get the approximation error in the classes $\Nc_i,\Nc_i'$ of the regressed functions $V_i^{(1)}$, $\overline{\widehat{Z}_{i}}^{(1)}$:
\begin{align}
\E\big|V_i^{(1)} - \widehat{\Uc}_{i}^{(1)}(X_{i})\big|^2 + \Delta t_i\E\big| \overline{\widehat{Z}_{i}}^{(1)} - \widehat{\Zc}_{i}^{(1)}(X_{i})\big|_{_2}^2    
& \leq \;  C(\varepsilon^{1,y}_i + \Delta t_i \varepsilon^{1,z}_i).  \label{eq: regression error}
\end{align}

\vspace{1mm}

\noindent \textit{\underline{Step 2.}} From the expressions of $V_i^{(1)}$ and $\hat{V}_i^{(1)}$ in \eqref{defVi}, \eqref{barViter}, and by Lipschitz continuity of $f$, we have by \eqref{eq: regression error}: 
\begin{align}
\E\big|\hat{V_{i}}^{(1)} - V_{i}^{(1)}\big|^2 & = \;  \Delta t_i^2 \E\Big|\E_i\big[f\big(t_{i},X_i,V_i^{(1)},\overline{\widehat{Z}_{i}}^{(1)}\big)
- f\big(t_{i},X_i,\widehat{\Uc}_{i}^{(1)}(X_{i}),\widehat{\Zc}_{i}^{(1)}(X_{i})\big)  \big]\Big|^2 \nonumber \\
&\leq \;  2 [f]_{_L}^2 |\Delta t_i|^2 \Big(\E\big|V_i^{(1)} - \widehat{\Uc}_{i}^{(1)}(X_{i})\big|^2 +  \E\big| \overline{\widehat{Z}_{i}}^{(1)} - \widehat{\Zc}_{i}^{(1)}(X_{i})\big|_{_2}^2 \Big) \nonumber \\
&\leq \;  C \Delta t_i (\varepsilon_i^{1,y} + \Delta t_i \varepsilon_i^{1,z}), \quad \quad  i = 0,\ldots,N.  \label{eq: auxiliary error}
\end{align}

%\vspace{1mm}

\noindent \textit{\underline{Step 3.}}  
From the  recursive expressions of $\bar{V_{i}}^{(1)}$, $\hat{V}_i^{(1)}$ in \eqref{defbarVi}, \eqref{eq: one step representation},   
and applying the Young, the Cauchy-Schwarz inequalities,   together with the  Lipschitz condition of $f$, we get  for $\beta>0$:

\begin{align}
& \E\big|\bar V_{i}^{(1)} - \hat V_{i}^{(1)}\big|^2 \nonumber  \\ &\leq \;  
(1+\beta ) \E\Big|\E_i\big[\bar V_{i+1}^{(1)} - \hat V_{i+1}^{(1)}\big]\Big|^2 + 2 [f]_{_L}^2 \Big(1 + \frac{1}{\beta}\Big)|\Delta t_i|^2
\Big( \E\big|\bar{V_{i}}^{(1)} - \hat{\Uc}_{i}^{(1)}(X_{i})\big|^2 +  \E\big|\bar Z_i^{(1)} - \widehat{\Zc}_{i}^{(1)}(X_{i})\big|_{_2}^2 \Big) \nonumber \\
&\leq \; (1+\beta) \E\Big|\E_i\big[\bar V_{i+1}^{(1)} - \hat V_{i+1}^{(1)} \big]\Big|^2 + 2 [f]_{_L}^2 \Big(1 + \frac{1}{\beta}\Big)|\Delta t_i|^2
\big(  3 \E|\bar{V_{i}}^{(1)} - \hat{V_{i}}^{(1)}|^2 +  2 \E\big|\bar Z_i^{(1)} - \overline{\widehat{Z}_{i}}^{(1)}\big|_{_2}^2  \big) \nonumber \\
&  \; + \;   2 [f]_{_L}^2 \Big(1 + \frac{1}{\beta}\Big)|\Delta t_i|^2 \Big( 3 \E|\hat{V_{i}}^{(1)} - V_{i}^{(1)}|^2 +  3 \E|V_{i}^{(1)} - \widehat{\Uc}^{(1)}_{i}(X_{i})|^2 
+ 2 \E\big|\overline{\widehat{Z}_{i}}^{(1)} - \widehat{\Zc}_{i}(X_{i})\big|_{_2}^2  \Big) \nonumber \\
& \leq \;   (1+\beta) \E\Big|\E_i\big[\bar V_{i+1}^{(1)} - \hat V_{i+1}^{(1)} \big]\Big|^2 +  (1 + \beta)\frac{2[f]_{_L}^2 |\Delta t_i|^2}{\beta}
\big(  3 \E|\bar{V_{i}}^{(1)} - \hat{V_{i}}^{(1)}|^2 +  2   \E\big|\bar Z_i^{(1)} - \overline{\widehat{Z}_{i}}^{(1)}\big|_{_2}^2  \big) \nonumber \\
& \; + \; C[f]_{_L}^2 \Big(1 + \frac{1}{\beta}\Big) \Delta t_i (\varepsilon_i^{1,y} + \Delta t_i \varepsilon_i^{1,z}), \label{barVinter} 
\end{align}
where we use \eqref{eq: regression error}, \eqref{eq: auxiliary error} in the last inequality.  Moreover, by \eqref{defbarVi}, \eqref{eq: one step representationZ}, we have 
\begin{align*}
\Delta t_i \big( \bar Z_i^{(1)} - \overline{\widehat{Z}_{i}}^{(1)} \big)  & = \; \E_i\Big[ \Delta W_i \big( \bar V_{i+1}^{(1)} - \hat V_{i+1}^{(1)} \big) \Big] \\
&= \; \E_i\Big[ \Delta W_i \Big( \bar V_{i+1}^{(1)} - \hat V_{i+1}^{(1)}  - \E_i\big[ \bar V_{i+1}^{(1)} - \hat V_{i+1}^{(1)}  \big]\Big) \Big], 
\end{align*}
and thus by the Cauchy-Schwarz inequality 
\begin{align}
\Delta t_i \E\big| \bar Z_i^{(1)} - \overline{\widehat{Z}_{i}}^{(1)} \big|^2_{_2} & \leq \;  d \Big( \E\big|\bar V_{i+1}^{(1)} - \hat V_{i+1}^{(1)} \big|^2 - \E\Big| \E_i\big[ \bar V_{i+1}^{(1)} - \hat V_{i+1}^{(1)}  \big]\Big|^2 \Big).  \label{eq: Z propagation}  
\end{align} 
Plugging into \eqref{barVinter}, and choosing $\beta$ $=$ $4d[f]_{_L}^2 \Delta t_i$, gives
\begin{align*}
& (1- C \Delta t_i)  \E\big|\bar V_{i}^{(1)} - \hat V_{i}^{(1)} \big|^2 \\ &\leq \;    (1 + C \Delta t_i)  \E\big|\bar V_{i+1}^{(1)} - \hat V_{i+1}^{(1)}\big|^2 + 
(1 + C \Delta t_i)  \big(\varepsilon_i^{1,y} + \Delta t_i \varepsilon_i^{1,z})
\end{align*}
By discrete Gronwall lemma, and recalling that $\bar V_N^{(1)}$ $=$ $\hat{V}_N^{(1)}$ ($=$  $g(X_N)$), we then obtain
\begin{align} \label{estimV4}
\sup_{i \in \llbracket0,N \rrbracket}   \E\big|\bar{V_{i}}^{(1)} - \hat{V_{i}}^{(1)}\big|^2 &\leq \; C \sum_{i=0}^{N-1}  \big(\varepsilon_i^{1,y} + \Delta t_i \varepsilon_i^{1,z}). 
\end{align} 
The required bound for the approximation error  on $Y$ follows by plugging \eqref{eq: time discretization error}, \eqref{eq: regression error}, \eqref{eq: auxiliary error}, and \eqref{estimV4} into 
\eqref{eq: error decomposition}.   

\vspace{1mm}

\noindent \textit{\underline{Step 4.}}  
We decompose the approximation error for the $Z$ component into three terms
\begin{align}
& \E\Big[\sum_{i=0}^{N-1}\int_{t_i}^{t_{i+1}} \big|Z_s^{(1)} - \widehat{\Zc}_i^{(1)}(X_i)\big|_{_2}^2\ \di s\Big]  \nonumber \\
&\leq \;  3 \sum_{i=0}^{N-1}\Big( \E\Big[\int_{t_i}^{t_{i+1}} \big|Z_s^{(1)} - \bar Z_i^{(1)}\big|_{_2}^2\ \di s\Big] 
+ \Delta t_i \E\big| \bar Z_i^{(1)}- \overline{\widehat{Z}}_i^{(1)}\big|_{_2}^2 + \Delta t_i \E\big| \overline{\widehat{Z}}_i^{(1)}-\widehat{\Zc}_i^{(1)}(X_i)\big|_{_2}^2 \Big).   \label{eq: decomposition Z error}
\end{align}  
By summing the inequality \eqref{eq: Z propagation} (recalling that $\bar V_N^{(1)}$ $=$ $\hat{V}_N^{(1)}$), and using \eqref{barVinter}, we have for $\beta\in(0,1)$:

\begin{align}
& \sum_{i=0}^{N-1} \Delta t_i \E| \bar Z_i^{(1)}- \overline{\widehat{Z}}_i^{(1)}|^2 \nonumber \\
& \leq \;  d\sum_{i=0}^{N-1} \Big( \E\big|\bar V_{i}^{(1)} - \hat V_{i}^{(1)} \big|^2 - \E\big| \E_i\big[ \bar V_{i+1}^{(1)} - \hat V_{i+1}^{(1)}  \big]\big|^2 \Big) \nonumber \\
& \leq \; d\sum_{i=0}^{N-1} \Big(\beta \E\Big|\E_i\big[\bar V_{i+1}^{(1)} - \hat V_{i+1}^{(1)} \big]\Big|^2 +  \big(1 + \frac{1}{\beta}\big)\big(2[f]_{_L}^2 |\Delta t_i|^2\big)
\big(  3 \E|\bar{V_{i}}^{(1)} - \hat{V_{i}}^{(1)}|^2 +  2   \E\big|\bar Z_i^{(1)} - \overline{\widehat{Z}_{i}}^{(1)}\big|_{_2}^2  \big) \nonumber \\
& \quad \quad \quad    + \; C[f]_{_L}^2 \big(1 + \frac{1}{\beta}\big) \Delta t_i (\varepsilon_i^{1,y} + \Delta t_i \varepsilon_i^{1,z}) \Big) \nonumber  \\
& \leq \;  d\sum_{i=0}^{N-1} \Big(\frac{8d[f]_{_L}^2 \Delta t_i}{1-8d[f]_{_L}^2 \Delta t_i} \E\big|\E_i\big[\bar V_{i+1}^{(1)} - \hat V_{i+1}^{(1)} \big]\big|^2 
+ \frac{3}{4d}\Delta t_i \E|\bar{V_{i}}^{(1)} - \hat{V_{i}}^{(1)}|^2 \; + \;  \frac{C}{8d } 
(\varepsilon_i^{1,y} + \Delta t_i \varepsilon_i^{1,z})\Big) \nonumber \\
& \quad \quad \quad + \;  \frac{1}{2} \sum_{i=0}^{N-1} \Delta t_i \E\big| \bar Z_i^{(1)}- \overline{\widehat{Z}}_i^{(1)}\big|_{_2}^2,  \label{eq: propagation Z approximation}
\end{align} 
by choosing explicitly $\beta = \frac{8d[f]_{_L}^2 \Delta t_i}{1-8d[f]_{_L}^2 \Delta t_i} = O(\Delta t_i)$ for $\Delta t_i$ small enough. Plugging \eqref{eq: time discretization error}, \eqref{eq: regression error}, \eqref{estimV4}, and \eqref{eq: propagation Z approximation}  (using the Jensen inequality) into \eqref{eq: decomposition Z error},  this proves the required bound for the approximation error on $Z$, and 
completes the proof.
\ep

\subsection{Proof of Proposition \ref{cor: convergence MDBDP}}
 Let us introduce the flow of the Euler scheme $(X_i)$ by:
\begin{align*}
X_{j+1}^{k,x} &:=  X_{j}^{k,x}  + \mu(t_j,X_{j}^{k,x}) \Delta t_j + \sigma(t_j,X_{j}^{k,x}) \Delta W_j,\ j=k,\ldots,N,
\end{align*} 
starting from $X_k^{k,x}$ $=$ $x$ at time step $j$ $=$ $k$ $\in$ $\N^*$. 
Under Assumption \ref{H2}, $f$ does not depend on $z$ so by slight abuse of notation we write $f(t,x,y) = f(t,x,y,z)$. Define
\begin{equation*}  
\left\{
\begin{array}{ccl}
V_{i,1}^{k,x} & = & \E_i\Big[g(X_{N}^{k,x}) - f\big(t_{i},X_{i}^{k,x},V_{i,1}^{k,x}\big)\Delta t_i - \Sum_{j=i+1}^{N-1} f\big(t_{j},X_{j}^{k,x},\widehat{\Uc}_{j}^{(1)}(X_{j}^{k,x})\big)\Delta t_j \Big],  \\
\hat{V}_{i,1}^{k,x} & = &  \E_i\Big[g(X_{N}^{k,x}) - \Sum_{j=i}^{N-1} f\big(t_{j},X_{j}^{k,x},\widehat{\Uc}_{j}^{(1)}(X_{j}^{k,x})\big)\Delta t_j \Big], \quad i=k,\ldots,N, \\
\overline{\widehat{Z}}_{i,1}^{k,x} & = &  \E_i\Big[ V_{i+1,1}^{k,x}
\frac{\Delta W_i}{\Delta t_i}  \Big],\ i=k,\ldots,N,  
\end{array}
\right.
\end{equation*}
and observe that we have the recursive relations:
\begin{align*}
\hat{V}_{i,1}^{k,x} &= \; \E_i\Big[\hat{V}_{i+1,1}^{k,x} -  f\big(t_{i},X_{i}^{k,x},\widehat{\Uc}_{i}^{(1)}(X_{i}^{k,x})\big) \Delta t_i \Big],  \ i=k,\ldots,N, \\
V_{i,1}^{k,x} &= \; \E_i\Big[\hat{V}_{i+1,1}^{k,x} -  f\big(t_{i},X_{i}^{k,x},V_{i,1}^{k,x}\big)\Delta t_i  \Big],  \ i=k,\ldots,N.
\end{align*} Notice by the Markov property of the discretized forward process $(X_i^{k,x})_i$ that 
\begin{align*}
V_{j,1}^{k,x}  \; = \;   v_{j}^{(1)}(X_{j}^{k,x}),\ \hat{V}_{j,1}^{k,x}  \; = \;  \hat{v}_{j}^{(1)}(X_{j}^{k,x}),\ \overline{\widehat{Z}}_{j,1}^{k,x}  \; = \;  \hat z_{j}^{(1)}(X_{j}^{k,x}),\ j=k,\ldots,N
\end{align*} 
for some deterministic function $v_{j}^{(1)}, \hat v_{j}^{(1)}, \hat z_{j}^{(1)}$ which do not depend on $k$. Notably $v_{j}^{(1)},\hat z_{j}^{(1)}$ are the same functions as in \eqref{eq: Markov functions}.

\noindent \textit{\underline{Step 1.}} We first estimate the evolution of the Lipschitz constant of $\hat v_i^{(1)}$  when $i$ varies. Let $x'\in\R^d$. By the Cauchy-Schwarz inequality
\begin{align}
   \Delta t_k \E\Big|\overline{\widehat{Z}}_{k,1}^{k,x} - \overline{\widehat{Z}}_{k,1}^{k,x'} \Big|^2  \leq \frac{1}{\Delta t_k} \E\Big|\E_k\Big[(\hat{V}_{k+1,1}^{k,x} - \hat{V}_{k+1,1}^{k,x'})  \Delta W_k \Big]\Big|^2
    \leq d\ \E\Big|\hat{V}_{k+1,1}^{k,x} - \hat{V}_{k+1,1}^{k,x'}\Big|^2 .\label{eq: Z Lips 1}
\end{align} Moreover, assuming that $\widehat{\Uc}_{k}^{(1)}$ is $[\widehat{\Uc}_{k}^{(1)}]-$Lipschitz yields
\begin{align*}
     \E\Big|\hat{V}_{k,1}^{k,x} - \hat{V}_{k,1}^{k,x'} \Big| & \leq  \E\Big|\hat{V}_{k+1,1}^{k,x}-\hat{V}_{k+1,1}^{k,x'}\Big| + \Delta t_i\E\Big| \{f\big(t_{k},x',\widehat{\Uc}_{k}^{(1)}(x')\big)-f\big(t_{k},x,\widehat{\Uc}_{k}^{(1)}(x)\big)\}  \Big|\\
    & \leq  \E\Big|\hat{V}_{k+1,1}^{k,x}-\hat{V}_{k+1,1}^{k,x'}\Big|+ [f]\Delta t_i (1+[\widehat{\Uc}_{k}^{(1)}]) |x-x'|_2.\label{eq: prop lipschitz hat}
\end{align*}
\noindent \textit{\underline{Step 2.}} 
Then for the $v_{k}^{(1)}$ function, the Young inequality gives
\begin{align*}
    \E\Big|V_{k,1}^{k,x} - V_{k,1}^{k,x'} \Big|^2 & \leq (1+\gamma\Delta t_k) \E\Big|\E_k\Big[\hat{V}_{k+1,1}^{k,x}-\hat{V}_{k+1,1}^{k,x'}\Big]\Big|^2 \\ & + (1+\frac{1}{\gamma\Delta t_k})\Delta t_k^2\E\Big|\{f\big(t_{k},x',V_{k,1}^{k,x'}\big) -f\big(t_{k},x,V_{k,1}^{k,x}\big)  \}  \Big|^2\\%%%%%%%%%%%%%%
    & \leq (1+\gamma\Delta t_k) \E\Big|\E_k\Big[\hat{V}_{k+1,1}^{k,x}-\hat{V}_{k+1,1}^{k,x'}\Big]\Big|^2 \\ & + 2[f]^2(1+\frac{1}{\gamma\Delta t_k})\Delta t_k^2\E\big[|x-x'|_2^2 + |V_{k,1}^{k,x}-V_{k,1}^{k,x'}|^2  \big].  
\end{align*}Therefore by choosing $\gamma = 2[f]^2$ for $\Delta t_k$ small enough 
\begin{align*}
    & \E\Big|V_{k,1}^{k,x} - V_{k,1}^{k,x'} \Big|^2  \leq (1+(\gamma+3)\Delta t_k) \E\Big|\hat{V}_{k+1,1}^{k,x}-\hat{V}_{k+1,1}^{k,x'}\Big|^2  + (1+(\gamma+3)\Delta t_i)\Delta t_{k}|x-x'|_2^2.
\end{align*}
Hence assuming $\hat v_{k+1}^{(1)}$ is $[\hat v_{k+1}^{(1)}]-$Lipchitz we obtain with Lemma \ref{lem: euler Lipschitz}
\begin{align}
  |v_{k}^{(1)}(x) - v_{k}^{(1)}(x')|^2 & = \E\Big|V_{k,1}^{k,x} - V_{k,1}^{k,x'} \Big|^2 \\& \leq (1+(\gamma+3)\Delta t_k)((1+C\Delta t_k)[\hat v_{k+1}^{(1)}]^2 + \Delta t_{k}) |x-x'|_2^2 \nonumber\\ & \leq (1+\tilde C\Delta t_k)([\hat v_{k+1}^{(1)}]^2 + \Delta t_{k}) |x-x'|_2^2 := [v_{k}^{(1)}]^2 |x-x'|_2^2,\label{eq: Lipschitz propagation}
\end{align} for $\Delta t_k$ small enough and another constant $\tilde C$. \\
\noindent \textit{\underline{Step 3.}}  Let $\epsilon>0,\ \kappa\in\N,\ \ell\in\N,\ m\in\R^\ell$ to be chosen after. Recursively, we choose $\Nc_k$ $=$ $\Gc_{[v_k^{(1)}],d,1,\ell,m}^{\zeta_{\kappa}}$ (with $[v_{N-1}^{(1)}]^2 = (1+\tilde C\Delta t_{N-1})([g]^2+\Delta t_{N-1})$ by \eqref{eq: Lipschitz propagation}) to approximate $v_k^{(1)}$ by $[v_k^{(1)}]-$Lipschitz GroupSort neural networks with uniform error $2[v_k]R\epsilon $ on $[-R,R]^d$ , see Proposition \ref{prop: tsb}. 
Therefore, by Lemma \ref{lem: euler Lipschitz}, estimations \eqref{eq: prop lipschitz hat} and the definition of $[v_k^{(1)}]$ in \eqref{eq: Lipschitz propagation}, for $\Delta t_k$ small enough
\begin{align*}
  |\hat v_{k}^{(1)}(x) - \hat v_{k}^{(1)}(x')|  & \leq \E\Big|\hat{V}_{k+1,1}^{k,x}-\hat{V}_{k+1,1}^{k,x'}\Big| + [f]\Delta t_k (1+[\widehat{\Uc}_{k}^{(1)}]) |x-x'|_2 \\
  & \leq (1+(C+2[f])\Delta t_k) [\hat v_{k+1}^{(1)}]|x-x'|_2 + [f](1+C\Delta t_k)\Delta t_k  |x-x'|_2.
\end{align*}  Thus $\hat v_k^{(1)}$ is $[\hat v_k^{(1)}]$ Lipschitz with 
\begin{align*}
  [\hat v_{k}^{(1)}] & \leq (1+\hat C\Delta t_k) [\hat v_{k+1}^{(1)}] + [f](1+C\Delta t_k)\Delta t_i
\end{align*} for a constant $\hat C$.
 By discrete Gronwall lemma over $k=N-1,\ldots,0$, 
\begin{align*}
  [\hat v_i^{(1)}]^2  \leq K,\
  [v_i^{(1)}]^2  \leq K,
\end{align*} uniformly in $i, N$ for some constant $K$. By \eqref{eq: Z Lips 1} and Proposition \ref{prop: tsb}, we choose $\Nc_k'$ $=$ $\Gc_{\sqrt{\frac{d}{\Delta t_i}}[v_k^{(1)}],d,d,\ell,m}^{\zeta_{\kappa}}$ to approximate $ \widehat{z_{k}}^{(1)}$ by GroupSort neural networks with uniform error $2\frac{d}{\sqrt{\Delta t_k}}[v_k]R\epsilon $ on $[-R,R]^d$. Thus $\sqrt{\Delta t_k} \widehat{z_{k}}^{(1)},\sqrt{\Delta t_k} \Zc_k^{(1)}$ are $d K$ Lipschitz, uniformly. \\
\noindent \textit{\underline{Step 4.}} The regression errors $\varepsilon_i^{1,y}$ verify from, localization of $X_i$ on  $B_2(R)$,  
the H\"older inequality, and the Markov inequality, the approximation error of $v_i^{(1)}$,  $i\in\llbracket0,N-1\rrbracket$,  by the class of GroupSort neural networks (Proposition \ref{prop: tsb})
\begin{align}
\sqrt{ \varepsilon_i^{1,y} } & = \;  \inf_{\Uc_{}\in\Gc_{[v_i],d,1}} \big\| v_i^{(1)}(X_i) - \Uc_{}(X_i)\big\|_{_2}  \nonumber \\
& \leq \;  \inf_{\Uc_{}\in\Gc_{[v_i],d,1}}  \Big\| \big(v_i^{(1)}(X_i) - \Uc(X_i)\big) {\bf 1}_{_{X_i \in B_2(R)}} \Big\|_{_2}  \; +   
\Big\| \big(v_i^{(1)}(X_i) -  \widehat{\Uc}_{i}^{(1)}(X_i)\big) {\bf 1}_{_{|X_i|_{_2}\geq R}} \Big\|_{_2}   \nonumber \\
& \leq \;  2KR\epsilon  + \;   \E\Big|\big(v_i^{(1)}(X_i) -  \widehat{\Uc}^{(1)}_{i}(X_i)\big)^{2q}\Big|^{1/{2q}} \E\Big|{\bf 1}_{_{|X_i|_{_2}\geq R}}^\frac{2q}{2q-1} \Big|^{\frac{2q-1}{2q}} \nonumber\\
& = \;  2KR\epsilon  + \;\E\Big|\big(v_i^{(1)}(X_i) -  \widehat{\Uc}_{i}^{(1)}(X_i)\big)^{2q}\Big|^{1/{2q}} \E[{\bf 1}_{_{|X_i|_{_2}\geq R}}]^{\frac{2q-1}{2q}} \nonumber\\
& \leq \;  2KR\epsilon  + \;   \frac{ \big( \big\|v_i^{(1)}(X_i)-v_i^{(1)}(0)\|_{_{2q}}  + \big\|\widehat{\Uc}_{i}^{(1)}(X_i)-v_i^{(1)}(0)\big\|_{_{2q}} \big) \big\|X_i\big\|_{_\frac{2 q}{2q-1}} }{R},  \label{Chebichev} 
\end{align} 
for $q$ $>$ $1$ and $2q =  2+\delta$ with $\delta$ as in the statement of the Proposition and by noticing that $(v_i^{(1)}(X_i) -  \widehat{\Uc}^{(1)}_{i}(X_i)\big)=(v_i^{(1)}(X_i)-v_i^{(1)}(0) -  (\widehat{\Uc}^{(1)}_{i}(X_i)-v_i^{(1)}(0))\big)$.
Now, by Lipschitz continuity of $v_i^{(1)},\widehat{\Uc}^{(1)}$ and because $0\in B_2(R)$ 
we have 
\begin{align}
& \big\|\widehat{\Uc}^{(1)}_{i}(X_i)-v_i^{(1)}(0)\big\|_{_{2q}} + \big\|v_{i}^{(1)}(X_i)-v_i^{(1)}(0)\big\|_{_{2q}} \\ &\leq \big\|\widehat{\Uc}^{(1)}_{i}(0)-v_i^{(1)}(0)\big\|_{_{2q}} + \big\|\widehat{\Uc}^{(1)}_{i}(X_i)-\widehat{\Uc}_{i}^{(1)}(0)\big\|_{_{2q}} +  \big\|v_{i}^{(1)}(X_i)-v_i^{(1)}(0)\big\|_{_{2q}} \\ & \leq 2KR\epsilon  + 2 K \big\|X_i\big\|_{_{2q}}.\label{eq: lipschitz zero}
\end{align}  
Recalling the standard estimate $\|X_i\|_{_{2q}}$ $\leq$ $C(1+\|\Xc_0\|_{_{2q}})$, $i$ $=$ $0,\ldots,N$, we then 
have  
\begin{align} \label{estierrory}
\varepsilon_i^{1,y} & \leq \;  C\Big\{ R^2\epsilon^2  + \frac{1+R^2\epsilon^2}{R^{2}} \Big\}, 
\end{align}
for some constant $C(d,\Xc_0)$ independent of $N,R,\epsilon$. Similarly, repeating \eqref{Chebichev} and \eqref{eq: lipschitz zero} by replacing respectively $\widehat{\Uc}^{(1)}_{i}$  by $\widehat{\Zc}_{i}^{(1)}$ and $v_i^{(1)}$ by $ \hat z_i^{(1)}$ and recalling that $\sqrt{\Delta t_k} \widehat{z_{k}}^{(1)},\sqrt{\Delta t_k} \Zc_k^{(1)}$ are $d K$ Lipschitz uniformly w.r.t $N$, we obtain
\begin{align} \label{estierrorz}
\Delta t_i \varepsilon_i^{1,z} & \leq \;  C\{ R^2\epsilon^2  + \frac{1+R^2\epsilon^2}{R^{2}}\}, 
\end{align} Then to obtain a convergence rate of $O(1/N)$ in \eqref{estimMDBDP}, it suffices to choose $R,\epsilon$ such that
\begin{equation*} 
 N R^2\epsilon^2  = O(1/N),\  N \frac{1+R^2\epsilon^2}{R^{2}} = O(1/N), 
\end{equation*} which is verified with if $d>1$ with $R = O(N)$, $\epsilon = O(\frac{1}{N^{2}})$. Then by Proposition \ref{prop: tsb}, we can choose the previously GroupSort neural networks with grouping size $\kappa=O(2\sqrt{d}N^{2})$, depth $\ell + 1 = O(d^2)$ and width $\sum_{i=0}^{\ell-1} m_i = O((2\sqrt{d} N^{2})^{d^2-1})$ if $d>1$. If $d=1$, we can take $\kappa=O(N^{2})$, depth $\ell + 1 = 3$ and width $\sum_{i=0}^{\ell-1} m_i =O(N^{2}) $.

\subsection{Proof of Theorem \ref{theo: CV DS}}

Let us introduce the  {\it explicit} backward Euler scheme of the BSDE \eqref{BSDEfeyn}:
\begin{equation} \label{defbarViexpli} 
\left\{
\begin{array}{ccl}
\bar V_{i}^{(2)} &= &  \E_i\Big[ \bar{V}_{i+1}^{(2)} - f\big(t_{i},X_i,\bar V_{i+1}^{(2)},\bar Z_i^{(2)}\big)\Delta t_i\Big]\\
\bar Z_{i}^{(2)} &= & \E_i\Big[\bar{V}_{i+1}^{(2)} \frac{\Delta W_i}{\Delta t_i} \Big], \quad i=0,\ldots,N-1,  
\end{array} 
\right.
\end{equation}
starting from $\bar V_N^{(2)}$ $=$ $g(X_N)$, and which is also known to converge with the same time discretization error \eqref{eq: time discretization error} than the implicit backward scheme.

\vspace{1mm}

We decompose the approximation error into three terms:
\begin{align}\label{eq: error decomposition 2}
\E\big|Y_{t_i} - \widehat{\Uc}_{i}^{(2)}(X_{i})\big|^2 & \leq \;  3\Big( \E\big|Y_{t_i} -  \bar V_{i}^{(2)} \big|^2 + \E\big| \bar V_{i}^{(2)} - V_{i}^{(2)}\big|^2 + \E\big| V_{i}^{(2)} -\widehat{\Uc}_{i}^{(2)}(X_{i})\big|^2\Big).
\end{align} 
The first term is the classical time discretization error,  and the rest of the proof is devoted to the analysis of the second and third terms.

\vspace{1mm}

\noindent \textit{\underline{Step 1.}}  Fix $i$ $\in$ $\llbracket0,N-1\rrbracket$. 
By definition of $V_i^{(2)}$ in \eqref{defViDS} and the martingale representation theorem, there exists a square integrable process $\{\widehat{Z}_s^{(2)}, t_i\leq s\leq t_{i+1}\}$ such that
\begin{align*}
& \widehat{\Uc}^{(2)}_{i+1}(X_{i+1})  - f\big(t_{i},X_i,\E_i\big[\widehat{\Uc}^{(2)}_{i+1}(X_{i+1})\big],\E_i\big[\sigma(t_i,X_i)\trans D_x\widehat{\Uc}_{i+1}^{(2)}(X_{i+1})\big]\big) \Delta t_i \\ &= \; 
V_i + \int_{t_i}^{t_{i+1}} \widehat{Z}_s^{(2)}. \di W_s.
\end{align*}
It follows that the quadratic loss function of the DS scheme in \eqref{lossDS}  is written as 
\begin{align} 
& J_i^S(\Uc_i) \nonumber 
\\ & := \;  \E\Big|\widehat{\Uc}_{i+1}^{(2)}(X_{i+1}) - \Uc_{i}(X_i) -  f\big(t_i,X_{i+1},\widehat{\Uc}_{i+1}^{(2)}(X_{i+1}), \sigma(t_i,X_i)\trans D_x\widehat{\Uc}_{i+1}^{(2)}(X_{i+1})\big)\Delta t_i\Big|^2 \nonumber \\ 
& = \;  \tilde J_i^S(\Uc_i)  + \E \Big[ \int_{t_i}^{t_{i+1}} |\widehat{Z}_s^{(2)}|^2_{_2} \di s \Big],  \label{JtildeJS}
\end{align}
where 
\begin{align*}
\tilde J_i^S(\Uc_i)  &:= \E\Big| V_i^{(2)} - \Uc_{i}(X_i) + \Delta f_i \Delta t_i \Big|^2 \\
\mbox{ with } \;  \; \Delta f_i & := \; f\big(t_{i},X_i,\E_i[\widehat{\Uc}_{i+1}^{(2)}(X_{i+1})],\E_i[\sigma(t_i,X_i)\trans D_x\widehat{\Uc}_{i+1}^{(2)}(X_{i+1})]\big)  \\
& \quad -  \; f\big(t_i,X_{i+1},\widehat{\Uc}_{i+1}^{(2)}(X_{i+1}), \sigma(t_i,X_i)\trans D_x\widehat{\Uc}_{i+1}^{(2)}(X_{i+1})\big).  
\end{align*}
A direct application of the Young inequality in the form $(a+b)^2 \geq \frac{1}{2} a^2 - b^2$ leads to
\begin{align} \label{tildeJbound}
\tilde J_i^S(\Uc_i)  +   |\Delta t_i|^2  \E\big|\Delta f_i\big|^2    &\geq \;  \frac{1}{2}\E\big| V_i^{(2)} - \Uc_{i}(X_i)\big|^2. 
\end{align}
On the other hand,  by Lipschitz continuity of $f$,  we have 
\begin{align}
& \tilde J_i^S(\Uc_i)  + |\Delta t_i|^2  \E\big|\Delta f_i\big|^2 \nonumber  \\ & \leq \;   2 \E\big| V_i^{(2)} - \Uc_{i}(X_i)\big|^2 + 3  |\Delta t_i|^2  \E\big|\Delta f_i\big|^2 \nonumber  \\
& \leq  \; 2 \E\big| V_i^{(2)} - \Uc_{i}(X_i)\big|^2 +  9 |\Delta t_i|^2  [f]_{_L}^2 \E|X_{i+1} - X_i|_{_2}^2  \nonumber \\
& \;\;\; + \;   9  |\Delta t_i|^2  [f]_{_L}^2 \E\Big|\widehat{\Uc}_{i+1}^{(2)}(X_{i+1}) - \E_i[\widehat{\Uc}_{i+1}^{(2)}(X_{i+1})] \Big|^2  \nonumber \\ 
& \;\;\;  + \; 9  |\Delta t_i|^2 [f]_{_L}^2  \E\Big|\sigma(t_i,X_i)\trans D_x\widehat{\Uc}_{i+1}^{(2)}(X_{i+1})  - \E_i\big[\sigma(t_i,X_i)\trans D_x\widehat{\Uc}_{i+1}^{(2)}(X_{i+1})\big]\Big|_{_2}^2 \nonumber  \\
& \leq  \; 2\ \E\big| V_i^{(2)} - \Uc_{i}^{(2)}(X_i)\big|^2 +  9 |\Delta t_i|^2  [f]_{_L}^2 \E|X_{i+1} - X_i|_{_2}^2 \nonumber \\
&  \;\;\; + \; 9  |\Delta t_i|^2  [f]_{_L}^2 \E\Big|\widehat{\Uc}_{i+1}^{(2)}(X_{i+1}) - \widehat{\Uc}_{i+1}^{(2)}(X_{i}) \Big|^2 \nonumber \\
& \;\;\;  + \; 9  |\Delta t_i|^2 [f]_{_L}^2   \E\Big[ |\sigma(t_i,X_i)|_{_2}^2 \E_i \big| D_x\widehat{\Uc}_{i+1}^{(2)}(X_{i+1})  -  D_x\widehat{\Uc}_{i+1}^{(2)}(X_{i}) \big|_{_2}^2 \Big],  \label{Jintertilde} 
\end{align}
where we use the definition of conditional expectation $\E_i[.]$, and the tower property of conditional expectation in the last inequality. 
Recall  that  $\widehat{\Uc}_{i+1}$ $\in$ $\Nc_{i}^{\gamma,\eta}$ is Lipschitz on $\R^d$. %from Lemma \ref{lemBach}
Actually,  we have  
\begin{align} \label{lipUc} 
\big| \widehat{\Uc}_{i+1}(x) -  \widehat{\Uc}_{i+1}(x') \big| & \leq \; \gamma_i |x-x'|_{_2}, \quad \forall x,x' \in \R^d. 
\end{align}
By the Cauchy-Schwarz inequality, we then have 
\begin{align*}
\E\Big|  \widehat{\Uc}_{i+1}^{(2)}(X_{i+1}) - \widehat{\Uc}_{i+1}^{(2)}(X_{i}^{}) \Big|^2 
& \leq \; 
C\gamma_i^2 \big\| X_{i+1} -   X_{i}^{} \big\|^2_{_4} \\
& \leq \;  C\gamma_i^2 \Delta t_i 
\end{align*} 
for $\Delta t_i$ small enough,  $R$ $\geq$ $1$, and we used again  the standard estimate: 
$\|X_{i}\|_{_{2p}}$ $\leq$ $C(1 + \|\Xc_0\|_{_{2p}})$,  $\|X_{i+1}- X_{i}\|_{_{2p}}$ $\leq$ $C(1 + \|\Xc_0\|_{_{2p}})\sqrt{\Delta t_i}$, for $p$ $\geq$ $1$. 
By using also the Lipschitz condition on  
$D_x \widehat{\Uc}_{i+1}$, and plugging into \eqref{Jintertilde}, we  get 

\begin{align}
\tilde J_i^S(\Uc_i)  + |\Delta t_i|^2  \E\big|\Delta f_i\big|^2  & \leq \;   2  \E\big| V_i^{(2)} - \Uc_{i}(X_i)\big|^2 +  
C(d) \max\big[\gamma_i^2,\eta_i^2\big] \Big( 1 +  \big\|\Xc_0\big\|^2_{_4}  \Big)^2 |\Delta t_i|^3. \label{interJtilde2}
\end{align} 
By applying inequality \eqref{tildeJbound} to $\Uc_i$ $=$ $\widehat{\Uc}_i^{(2)}$, which  is a minimizer of $\tilde{\Jc}_i^{S}$ by \eqref{JtildeJS}, and combining with  
\eqref{interJtilde2}, this yields  for all functions $\Uc_{i}$ in $\Nc_{i}^{\gamma,\eta}$: 
\begin{align*}
\E\big|V_i^{(2)} - \widehat{\Uc}_{i}^{(2)}(X_i) \big|^2 & \leq \;   C\Big(\E\big|V_i^{(2)} - \Uc_{i}(X_i) \big|^2 + (1+\|\Xc_0\|_{_4}^2)^2 |\Delta t_i|^3   \max\big[\gamma_i^2,\eta_i^2\big]  \Big),
\end{align*} 
and thus by minimizing over $\Uc_{i}$ in the right hand side
\begin{align}\label{eq: NN error 2}
\E\big|V_i^{(2)} - \widehat{\Uc}_{i}^{(2)}(X_{i}) \big|^2 &\leq \;  C\Big( \varepsilon_i^{\gamma,\eta} + (1+\|\Xc_0\|_{_4}^2)^2 |\Delta t_i|^3   \max\big[\gamma_i^2,\eta_i^2\big]    \Big).
\end{align}

\vspace{1mm}

\noindent \textit{\underline{Step 2.}} 

From the expressions of $V_i^{(2)}$, and $\bar V_{i}^{(2)}$ in \eqref{defViDS} and \eqref{defbarViexpli}, and by applying the Young, the Cauchy-Schwarz inequalities,  we get with $\beta$ $\in$ $(0,1)$

\begin{align}
& \E\big| \bar V_{i}^{(2)} - V_{i}^{(2)}\big|^2 \nonumber \\ & \leq \; (1+\beta)  \E\Big| \E_i\big[ \widehat{\Uc}^{(2)}_{i+1}(X_{i+1}) - \bar V_{i+1}^{(2)}\big]\Big|^2  \nonumber \\ 
& \; + \;  \big(1+\frac{1}{\beta}\big) |\Delta t_i|^2 \E \Big| f\big(t_{i},X_i,\bar V_{i+1}^{(2)},\bar Z_{i}^{(2)}\big) \nonumber \\ 
&  \hspace{3cm}  - \; f\big(t_{i},X_i,\E_i[\widehat{\Uc}_{i+1}^{(2)}(X_{i+1})],\E_i[\sigma(t_i,X_i)\trans D_x\widehat{\Uc}_{i+1}^{(2)}(X_{i+1})]\big)  \Big|^2 \nonumber \\
&\leq \; (1+\beta)  \E\Big| \E_i\big[ \widehat{\Uc}_{i+1}^{(2)}(X_{i+1}) - \bar V_{i+1}^{(2)} \big]\Big|^2 \nonumber \\
&  \; + \;  2[f]_{_L}^2 \big(1+\frac{1}{\beta}\big)|\Delta t_i|^2\Big( \E\big|\widehat{\Uc}_{i+1}^{(2)}(X_{i+1}) -  \bar V_{i+1}^{(2)}\big|^2  +  
\E\Big|\E_i\big[\sigma(t_i,X_i)\trans D_x\widehat{\Uc}_{i+1}^{(2)}(X_{i+1})\big] - \bar Z_i^{(2)} \Big|_{_2}^2\Big). \label{interDS} 
\end{align}
Now, recalling the expression of $\bar Z_i$ in \eqref{defbarViexpli}, and by a standard integration by parts argument (see e.g. Lemma 2.1 in \cite{FTW11}), we have 
\begin{align*}
&\E_i\big[\sigma(t_i,X_i)\trans D_x\widehat{\Uc}_{i+1}^{(2)}(X_{i+1})\big] - \bar Z_i^{(2)} \\ &= \;  \E_i\Big[ \big( \widehat{\Uc}_{i+1}^{(2)}(X_{i+1}) - \bar V_{i+1}^{(2)} \big)\frac{\Delta W_i}{\Delta t_i} \Big] \\
&= \; \E_i\Big[ \Big( \widehat{\Uc}_{i+1}^{(2)}(X_{i+1}) - \bar V_{i+1}^{(2)}  - \E_i\big[ \widehat{\Uc}_{i+1}^{(2)}(X_{i+1}) - \bar V_{i+1}^{(2)} \big]   \Big)\frac{\Delta W_i}{\Delta t_i} \Big].  
\end{align*}
By plugging into \eqref{interDS}, we then obtain by the Cauchy-Schwarz inequality  
\begin{align}
& \E\big| \bar{V_{i}}^{(2)} - V_{i}^{(2)}\big|^2 \nonumber  \\  &\leq (1+\beta) \E\Big| \E_i\big[ \widehat{\Uc}_{i+1}^{(2)}(X_{i+1}) - \bar V_{i+1}^{(2)}\big]\Big|^2 + 2[f]_{_L}^2
( 1+ \beta) \frac{|\Delta t_i|^2}{\beta}\Big\{  \E\big|\widehat{\Uc}_{i+1}^{(2)}(X_{i+1}) -  \bar V_{i+1}^{(2)}\big|^2 \nonumber \\ 
& \;  + \; \frac{d}{\Delta t_i} \Big[  \E\big|\widehat{\Uc}_{i+1}^{(2)}(X_{i+1}) - \bar V_{i+1}^{(2)} \big|^2 - \E\Big| \E_i\big[\widehat{\Uc}_{i+1}^{(2)}(X_{i+1}) - \bar V_{i+1}^{(2)}\big]\Big|^2\Big] \Big\} \nonumber \\
&\leq  \; (1 + C \Delta t_i)  \E\big|\widehat{\Uc}_{i+1}^{(2)}(X_{i+1}) - \bar V_{i+1}^{(2)} \big|^2,  \label{inter3} 
\end{align} 
by choosing explicitly $\beta$ $=$ $2d[f]_{_L}^2 \Delta t_i$ for $\Delta t_i$ small enough.  By using again the Young inequality on the r.h.s. of \eqref{inter3}, and since $\Delta t_i$ $=$ $O(1/N)$, we then  get
\begin{align*}
\E\big| \bar V_{i}^{(2)} - V_{i}^{(2)}\big|^2  &\leq \; (1 + C \Delta t_i)  \E\big| \bar V_{i+1}^{(2)}  - V_{i+1}^{(2)} \big|^2 +  CN \E\big| \widehat{\Uc}_{i+1}^{(2)}(X_{i+1}) -  V_{i+1}^{(2)} \big|^2. 
\end{align*} 
By discrete Gronwall lemma, and recalling that $\bar V_N^{(2)}$ $=$ $g(X_N)$, $V_N^{(2)}$ $=$ $\widehat{\Uc}_N(X_N)$, we deduce with \eqref{eq: NN error 2} that 
\begin{align}
\sup_{i\in\llbracket 0,N \rrbracket}  \E\big|\bar V_{i}^{(2)}  -  V_{i}^{(2)} \big|^2 & \leq  \; C \varepsilon_N^{\gamma,\eta}  
+ CN \sum_{i=1}^{N-1}  \Big( \varepsilon_i^{\gamma,\eta} + (1+\|\Xc_0\|_{_4}^2)^2 |\Delta t_i|^3   \max\big[\gamma_i^2,\eta_i^2\big]  \Big). \label{Vinter3DS} 
\end{align} 

The required  bound \eqref{errorDS} for the approximation error on $Y$ follows by plugging \eqref{eq: time discretization error}, \eqref{eq: NN error 2} and \eqref{Vinter3DS}  into \eqref{eq: error decomposition 2}. 
\ep

\subsection{Proof of Proposition \ref{cor: convergence DBDP}}
For $x$ $\in$ $\R^d$, we define the processes  $X_{j+1}^{j,x}$, $j$ $=$ $0,\ldots,N$, 
\begin{align*}
X_{j+1}^{j,x} &:=  x  + \mu(t_j,x) \Delta t_j + \sigma(t_j,x) \Delta W_j, \quad j=0,\ldots,N-1. 
\end{align*} Define also
\begin{equation*}  
\left\{
\begin{array}{ccl}
V_{i,3}^{x} & = & \E_i\Big[\widehat{\Uc}_{i+1}^{(3)}(X_{i+1}^{i,x})-  f\big(t_{i},x,V_{i,3}^{x},\overline{\widehat{Z}}_{i,3}^{x}\big)\Delta t_i \Big] = v_i^{(3)}(x)\\
\overline{\widehat{Z}}_{i,3}^{x} & = &  \E_i\Big[\widehat{\Uc}_{i+1}^{(3)}(X_{i+1}^{i,x}) \frac{\Delta W_i}{\Delta t_i}\Big] = \widehat{z}_{i}^{(3)}(x)
\end{array}
\right.
\end{equation*} with $v_i^{(3)},\widehat{z}_{i}^{(3)}$ as in \eqref{eq : Markov functions 3} by Markov property.\\
\noindent \textit{\underline{Step 1.}} 
Let $x'\in\R^d$. By the Cauchy-Schwarz inequality, we have the standard estimate 
\begin{align}\label{eq: Z estimation}
   & \Delta t_i \E\Big|\overline{\widehat{Z}}_{i,3}^{x} - \overline{\widehat{Z}}_{i,3}^{x'} \Big|_2^2 \nonumber \\ & = \frac{1}{\Delta t_i} \E\Big|\E_i\Big[\{\widehat{\Uc}_{i+1}^{(3)}(X_{i+1}^{i,x}) - \widehat{\Uc}_{i+1}^{(3)}(X_{i+1}^{i,x'})- \E_i\Big[\widehat{\Uc}_{i+1}^{(3)}(X_{i+1}^{i,x}) - \widehat{\Uc}_{i+1}^{(3)}(X_{i+1}^{i,x'})\Big]\}  \Delta W_i \Big]\Big|^2\nonumber \\
    & \leq d\ \Big(\E\Big|\widehat{\Uc}_{i+1}^{(3)}(X_{i+1}^{i,x}) - \widehat{\Uc}_{i+1}^{(3)}(X_{i+1}^{i,x'})\Big|^2 - \E\Big| \E_i\Big[\widehat{\Uc}^{(3)}_{i+1}(X_{i+1}^{i,x}) - \widehat{\Uc}_{i+1}^{(3)}(X_{i+1}^{i,x'})\Big]\Big|^2\Big).
\end{align} 
We then apply the Young inequality to see that
\begin{align*}
    & \E\Big|V_{i,3}^{x} - V_{i,3}^{x'} \Big|^2 \\& \leq (1+\gamma\Delta t_i) \E\Big|\E_i\Big[\widehat{\Uc}_{i+1}^{(3)}(X_{i+1}^{i,x})-\widehat{\Uc}_{i+1}^{(3)}(X_{i+1}^{i,x'})\Big]\Big|^2 \\ & + (1+\frac{1}{\gamma\Delta t_i})\Delta t_i^2\E\Big|\{ f\big(t_{i},x',V_{i,3}^{x'},\overline{\widehat{Z}_{i}}^{3,x'}\big)-f\big(t_{i},x,V_{i,3}^{x},\overline{\widehat{Z}}_{i,3}^{x}\big)  \}  \Big|^2\\%%%%%%%%%%%%%%
    & \leq (1+\gamma\Delta t_i) \E\Big|\E_i\Big[\widehat{\Uc}_{i+1}^{(3)}(X_{i+1}^{i,x})-\widehat{\Uc}_{i+1}^{(3)}(X_{i+1}^{i,x'})\Big]\Big|^2 \\ & + 3[f]^2(1+\frac{1}{\gamma\Delta t_i})\Delta t_i^2\E\{|x-x'|_2^2 + |V_{i,3}^{x}-V_{i,3}^{x'}|^2 + |\overline{\widehat{Z}}_{i,3}^{x}-\overline{\widehat{Z}}_{i,3}^{x'}|_2^2  \}.  
\end{align*} Hence for $\gamma = 3[f]^2d$  and $\Delta t_i$ small enough, using \eqref{eq: Z estimation} we obtain %\blue{Specify use of equation of Z above}
\begin{align*}
      \E\Big|V_{i,3}^{x} - V_{i,3}^{x'} \Big|^2  & \leq (1+(\gamma+3d)\Delta t_i) \E\Big|\widehat{\Uc}_{i+1}^{(3)}(X_{i+1}^{i,x})-\widehat{\Uc}_{i+1}^{(3)}(X_{i+1}^{i,x'})\Big|^2 \\ & + (1+(\gamma+3d)\Delta t_i)\Delta t_i\E|x-x'|_2^2.
\end{align*} Therefore, with Lemma \ref{lem: euler Lipschitz} \begin{align*}
     |v_{N-1}^{(3)}(x) - v_{N-1}^{(3)}(x')|^2 & = \E\Big|V_{N-1,3}^{x} - V_{N-1,3}^{x'} \Big|^2\\  & \leq (1+(\gamma+3d)\Delta t_{N-1})((1+C\Delta t_{N-1}) [g]^2 + \Delta t_i) |x-x'|_2^2
     \\  & \leq (1+\hat C\Delta t_{N-1}) ([g]^2 + \Delta t_i) |x-x'|_2^2,
\end{align*} for some constant $\hat C$. Similarly, assuming $\widehat{\Uc}_{i+1}^{(3)}$ is $[\widehat{\Uc}_{i+1}^{(3)}]-$Lipschitz, $v_{i}^{(3)}$ is Lipschitz with constant $[v_i^{(3)}]$ verifying
\begin{align*}
     [v_i^{(3)}]^2 \leq (1+\hat C\Delta t_i) ([\widehat{\Uc}_{i+1}^{(3)}]^2 + \Delta t_i).
\end{align*}
\noindent \textit{\underline{Step 2.}} Let $\epsilon>0,\ \kappa\in\N,\ \ell\in\N,\ m\in\R^\ell$ to be chosen after. Recursively, we approximate $ v_{i}^{(3)}$ by a $[v_i^{(3)}]$-Lipschitz GroupSort neural network $\Uc_i^{(3)}$ in $\Nc_i = \Gc_{[v_i^{(3)}],d,1,\ell,m}^{\zeta_\kappa}$ with uniform error $2[v_i]R\epsilon$ on $[-R,R]^d$ (Proposition \ref{prop: tsb}).
Then by discrete Gronwall inequality\begin{align*}
[\Uc_i^{(3)}]^2    \leq K,\
     [v_i^{(3)}]^2     \leq K,
\end{align*} uniformly in $i, N$ for some constant $K$. 
Thus $v_i^{(3)},\Uc_i^{(3)}$ are $K$ Lipschitz, uniformly. Then we approximate by \eqref{eq: Z estimation} $  \widehat{z_{i}}^{(3)}$ by a $\sqrt{\frac{d}{\Delta t_i}}[v_i^{(3)}]$-Lipschitz GroupSort  neural network $\Zc_i$ in $\Nc_i'=\Gc_{\sqrt{\frac{d}{\Delta t_i}}[v_i^{(3)}],d,d,\ell,m}^{\zeta_\kappa}$ with uniform error $2\frac{d}{\sqrt{\Delta t_i}}[v_i^{(3)}]R\varepsilon$ on $[-R,R]^d$ thanks to Proposition \ref{prop: tsb}. Thus $\sqrt{\Delta t_i} \widehat{z_{i}}^{(3)},\sqrt{\Delta t_i} \Zc_i^{(3)}$ are $d K$ Lipschitz, uniformly.\\
\noindent \textit{\underline{Step 3.}} The regression errors $\varepsilon_i^{3,y}$ verify from, localization of $X_i$ on  $B_2(R)$,  
%$=$ $\{x\in\R^d: |x|_{_2}\leq R\}$,   
the H\"older inequality, and  the Markov inequality,  the approximation error of $v_i^{(3)}$,  $i\in\llbracket0,N-1\rrbracket$,  by the class of GroupSort neural networks (Proposition \ref{prop: tsb})
\begin{align}
\sqrt{ \varepsilon_i^{3,y} } & = \;  \inf_{\Uc_{}\in\Gc_{[v_k],d,1}} \big\| v_i^{(3)}(X_i) - \Uc_{}(X_i)\big\|_{_2}  \nonumber \\
& \leq \;  \inf_{\Uc_{}\in\Gc_{[v_k],d,1}}  \Big\| \big(v_i^{(3)}(X_i) - \Uc(X_i)\big) {\bf 1}_{_{X_i \in B_2(R)}} \Big\|_{_2}  \; +   
\Big\| \big(v_i^{(3)}(X_i) -  \widehat{\Uc}_{i}^{(3)}(X_i)\big) {\bf 1}_{_{|X_i|_{_2}\geq R}} \Big\|_{_2}   \nonumber \\
& \leq \;  2KR\epsilon  + \;   \E\Big|\big(v_i^{(3)}(X_i) -  \widehat{\Uc}_{i}^{(3)}(X_i)\big)^{2q}\Big|^{1/{2q}} \E\Big|{\bf 1}_{_{|X_i|_{_2}\geq R}}^\frac{2q}{2q-1} \Big|^{\frac{2q-1}{2q}} \nonumber\\
& = \;  2KR\epsilon  + \;\E\Big|\big(v_i^{(3)}(X_i) -  \widehat{\Uc}_{i}^{(3)}(X_i)\big)^{2q}\Big|^{1/{2q}} \E[{\bf 1}_{_{|X_i|_{_2}\geq R}}]^{\frac{2q-1}{2q}} \nonumber\\
& \leq \;  2KR\epsilon  + \;   \frac{ \big( \big\|v_i^{(3)}(X_i)-v_i^{(3)}(0)\|_{_{2q}}  + \big\|\widehat{\Uc}_{i}^{(3)}(X_i)-v_i^{(3)}(0)\big\|_{_{2q}} \big) \big\|X_i\big\|_{_\frac{2 q}{2q-1}} }{R},  \label{Chebichev DBDP} 
\end{align} 
by noticing that $(v_i^{(3)}(X_i) -  \widehat{\Uc}_{i}^{(3)}(X_i)\big)=(v_i^{(3)}(X_i)-v_i^{(3)}(0) -  (\widehat{\Uc}_{i}^{(3)}(X_i)-v_i^{(3)}(0))\big)$ for $q$ $>$ $0$ and $2q =  2+\delta$ with $\delta$ as in the statement of the Proposition.  
Now, by Lipschitz continuity of $v_i^{(3)},\widehat{\Uc}^{(3)}$ and because $0\in B_2(R)$ we have 
\begin{align}
& \big\|\widehat{\Uc}_{i}^{(3)}(X_i)-v_i^{(3)}(0)\big\|_{_{2q}} + \big\|v_{i}^{(3)}(X_i)-v_i^{(3)}(0)\big\|_{_{2q}} \\ &\leq \big\|\widehat{\Uc}_{i}^{(3)}(0)-v_i^{(3)}(0)\big\|_{_{2q}} + \big\|\widehat{\Uc}_{i}^{(3)}(X_i)-\widehat{\Uc}_{i}^{(3)}(0)\big\|_{_{2q}} +  \big\|v_{i}^{(3)}(X_i)-v_i^{(3)}(0)\big\|_{_{2q}}\\ & \leq 2KR\epsilon  + 2 K \big\|X_i\big\|_{_{2q}}.\label{eq: lipschitz zero DBDP}
\end{align}  
Recalling the standard estimate $\|X_i\|_{_{2q}}$ $\leq$ $C(1+\|\Xc_0\|_{_{2q}})$, $i$ $=$ $0,\ldots,N$, we then 
have  
\begin{align} \label{estierrory DBDP}
\varepsilon_i^{3,y} & \leq \;  C\Big\{ R^2\epsilon^2  + \frac{1+R^2\epsilon^2}{R^{2}} \Big\}, 
\end{align}
for some constant $C(d,\Xc_0)$ independent of $N,R,\epsilon$. 
Similarly repeating \eqref{Chebichev DBDP} and \eqref{eq: lipschitz zero DBDP} by replacing respectively $\widehat{\Uc}^{(3)}_{i}$  by $\widehat{\Zc}_{i}^{(3)}$ and $v_i^{(3)}$ by $ \hat z_i^{(3)}$ and recalling that $\sqrt{\Delta t_i} \widehat{z_{i}}^{(3)},\sqrt{\Delta t_i} \Zc_i^{(3)}$ are $d K$ Lipschitz uniformly w.r.t. $N$, we obtain
\begin{align} \label{estierrorz DBDP}
\Delta t_i \varepsilon_i^{3,z} & \leq \;  C\Big\{ R^2\epsilon^2  + \frac{1+R^2\epsilon^2}{R^{2}} \Big\}. 
\end{align} 
Then to obtain a convergence rate of $O(1/N)$ in \eqref{estimDBDP}, it suffices to choose $R,\varepsilon$ such that
\begin{equation*} 
 N^2 R^2\epsilon^2  = O(1/N), \quad   N^2 \frac{1+R^2\epsilon^2}{R^{2}} = O(1/N),
\end{equation*} 
which is verified with $R = O(N^{3/2})$, $\epsilon = O(\frac{1}{N^{3}})$. Then by Proposition \ref{prop: tsb}, if $d>1$ we can choose the previously GroupSort neural networks with grouping size $\kappa=O(\lceil 2\sqrt{d} N^{3}\rceil)$, depth $\ell + 1 = O(d^2)$ and width $\sum_{i=0}^{\ell-1} m_i = O((2\sqrt{d} N^{3})^{d^2-1})$. %(1+1/p)/2 
If $d=1$, we can take $\kappa=O( N^{3})$, depth $\ell + 1 = 3$ and width $\sum_{i=0}^{\ell-1} m_i =O(N^{3}) $.

\section{Numerical Tests}\label{sec: numerics}

We test  our different algorithms and the cited ones in this paper on some examples and by varying  the state space dimension. 
In each example we use tanh as activation function, and an architecture composed of 2 hidden layers with $d+10$ neurons. We apply Adam gradient descent \cite{KB14} with a decreasing learning rate, using the Tensorflow library. 
Each numerical experiment is conducted using a node composed of 2 Intel® Xeon® Gold 5122 Processors, 192 Gb of RAM, and 2 GPU nVidia® Tesla® V100 16Gb.  We use a batch size of 1000. 
We do not implement the GroupSort network because even if  it is useful for theoretical analysis, it would be costly to use in practice: on the one hand, it 
will induce a cost of order $O(n\ln n)$ where $n$ is the batch size, compared to a linear cost $O(n)$  for standard activation function; on the other hand, it requires  to track the Lipschitz constant of the functions and adapt the networks architecture accordingly. Whereas theoretical results suggest to take deep neural networks with depth increasing with the dimension, we  observe that two hidden layers are enough to obtain a good accuracy. According to our experience $\tanh$ activation function  provides  the best results. ReLU or Elu being not bounded, some explosion tends to appear when the learning rates are not small enough.\\
We consider examples from \cite{HPW19} to compare its DBDP scheme with the DS and MDBDP schemes.  
The three first lines of the tables below are taken from \cite{HPW19}. For each test, the two best results are highlighted in boldface. We use 5000 gradient descent iterations by time step except 20000 for the projection of the final condition. The execution of the multistep algorithm approximately takes between 
8000 s. and 16000 s. (depending on the dimension) for a resolution with $N = 120$. More numerical examples and tests are presented  
in the extended version \cite{GPW20} of this paper, and the codes
at: \url{https://github.com/MaxGermain/MultistepBSDE}.

\subsection{PDE with Bounded Solution and Simple Structure}\label{sec: Pde simple}

We take the parameters: $\mu$ $=$ $\frac{0.2}{d}$,  $\sigma$ $=$ $\frac{I_d}{\sqrt{d}}$, terminal condition $g(x)$ $=$ $\cos(\overline{x})$, with $\overline{x} = \sum_{i=1}^d x_i$, and generator
\begin{eqnarray}
& & \quad f(x,y,z) \label{PDE semi BOUNDED} \\
& =  &  - \Big(\cos(\overline{x})+0.2\sin(\overline{x})\Big)e^{\frac{T-t}{2}} + \frac{1}{2} (\sin(\overline{x})\cos(\overline{x})e^{T-t})^2  \nonumber 
- \frac{1}{2d}(y(1_d \cdot z))^2.
\end{eqnarray}
so  that the PDE solution is given by
$u(t,x)$ $=$ $\cos\left(\overline{x}\right) \exp\left(\frac{T-t}{2}\right)$. 

We fix $T$ $=$ $1$, and increase the dimension $d$. The results are reported 
%in Figure \ref{fig: table results bounded d=5} for $d$ $=$ $5$,  
in  Figure \ref{fig: table results bounded d=10} for $d$ $=$ $10$,  in Figure \ref{fig: table results bounded d=20} for $d$ $=$ $20$, and in Figure \ref{fig: table results bounded d=50} for $d$ $=$ $50$. It is observed that all the schemes DBDP, DBSDE and MDBDP provide quite accurate results 
%with comparable precision, 
with smallest standard deviation for MDBDP, and largely outperforms the DS scheme.

\begin{small}

\begin{figure}[H]
	\centering
	\begin{tabular}{|c|c|c|c|}
		\hline
		& Averaged value & Standard deviation & Relative error (\%)\\
		\hline
		[HPW20] (DBDP1) & - 1.3895 & 0.0015 & 0.44\\
		\hline
		[HPW20] (DBDP2)  & - 1.3913 & \textbf{0.0006} & 0.57\\
		\hline
		[HJE17] (DBSDE) & \textbf{- 1.3880} & 0.0016 & \textbf{0.33}\\
		\hline
		[Bec+19] (DS)  & - 1.4097  &  0.0173 & 1.90\\
		\hline 
		MDBDP  & \textbf{-1.3887} & \textbf{0.0006} & \textbf{0.38}\\
		\hline 
	\end{tabular}
	\caption{Estimate of $u(0, x_0 )$ in the case \eqref{PDE semi BOUNDED}, where $d = 10, x_0 = 1\ \mathds{1}_{10}, T=1$ with 120 time steps. Average and standard deviation
		observed over 10 independent runs are reported. The theoretical solution is -1.383395.}
	\label{fig: table results bounded d=10}
\end{figure}

\end{small}

\begin{figure}[H]
	\centering
	\begin{tabular}{|c|c|c|c|}
		\hline
		& Averaged value & Standard deviation & Relative error (\%)\\
		\hline
		[HPW20] (DBDP1) & 0.6760 & 0.0027 & 0.47\\
		\hline
		[HPW20] (DBDP2)  & \textbf{0.6710} & 0.0056 & \textbf{0.27}\\
		\hline
		[HJE17] (DBSDE) & 0.6869 & \textbf{0.0024} & 2.09\\
		\hline
		[Bec+19] (DS)  & 0.6944  & 0.0201 & 3.21 \\
		\hline 
		MDBDP  & \textbf{0.6744} & \textbf{0.0005} & \textbf{0.24}\\
		\hline 
	\end{tabular}
	\caption{Estimate of $u(0, x_0 )$ in the case \eqref{PDE semi BOUNDED}, where $d = 20, x_0 = 1\ \mathds{1}_{20}, T=1$ with 120 time steps. Average and standard deviation
		observed over 10 independent runs are reported. The theoretical solution is 0.6728135.}
	\label{fig: table results bounded d=20}
\end{figure}

\begin{figure}[H]
	\centering
	\begin{tabular}{|c|c|c|c|}
		\hline
		& Averaged value & Standard deviation & Relative error (\%)\\
		\hline
		[HPW20] (DBDP1) & \textbf{1.5903} & \textbf{0.0063} & \textbf{0.04}\\
		\hline
		[HPW20] (DBDP2)  & 1.5876 & 0.0068 & 0.21\\
		\hline
		[HJE17] (DBSDE) & 1.5830 & 0.0361 & 0.50\\
		\hline
		[Bec+19] (DS)  & 1.6485 & 0.0140 & 3.62\\
		\hline 
		MDBDP  & \textbf{1.5924} & \textbf{0.0005} & \textbf{0.09}\\
		\hline 
	\end{tabular}
	\caption{Estimate of $u(0, x_0 )$ in the case \eqref{PDE semi BOUNDED}, where $d = 50, x_0 = 1\ \mathds{1}_{50}, T=1$ with 120 time steps. Average and standard deviation
		observed over 10 independent runs are reported. The theoretical solution is 1.5909.}
	\label{fig: table results bounded d=50}
\end{figure}

\subsection{PDE with Unbounded Solution and more Complex Structure}

We consider a toy example with solution given by 
\begin{align*}
u(t,x) = \frac{T-t}{d} \sum_{i=1}^d (\sin(x_i) 1_{x_i<0} + x_i 1_{x_i\geq0}) + \cos\Big(\sum_{i=1}^d i x_i\Big).
\end{align*}
Therefore we take the parameters
\begin{align}\label{PDE semi UNBOUNDED}
%\begin{cases}
\mu =0,\ \sigma = \frac{I_d}{\sqrt{d}}, \;\; T = 1, \quad
f(t,x,y,z) = k(t,x) - \frac{y}{\sqrt{d}} (1_d\cdot z) - \frac{y^2}{2}
%\end{cases}
\end{align}
with $k(t,x)  = \partial_t u + \frac{1}{2d}\Tr(D^2_x u) + \frac{u}{\sqrt{d}} \sum_i D_{x_i} u + \frac{u^2}{2}.$

We start with tests in dimension $d$ $=$ $1$. The  results are reported in Figure \ref{fig: table results d=1}.

\begin{figure}[H]
	\centering
	\begin{tabular}{|c|c|c|c|}
		\hline
		& Averaged value & Standard deviation & Relative error (\%)\\
		\hline
		[HPW20] (DBDP1) & 1.3720 & 0.0030 & 0.41\\
		\hline
		[HPW20] (DBDP2)  & \textbf{1.3736} & 0.0022 & \textbf{0.29}\\
		\hline
		[HJE17] (DBSDE) & 1.3724 & \textbf{0.0005} & 0.38\\
		\hline
		[Bec+19] (DS)  & 1.3630 & 0.0079 & 1.06\\
		\hline 
		MDBDP  & \textbf{1.3735}  & \textbf{0.0003} & \textbf{0.30}\\
		\hline 
	\end{tabular}
	\caption{Estimate of $u(0, x_0 )$ in the case \eqref{PDE semi UNBOUNDED}, where $d = 1, x_0 = 0.5$, $T$ $=$ $1$ with 120 time steps. Average and standard deviation
		observed over 10 independent runs are reported. The theoretical solution is 1.3776.}
	\label{fig: table results d=1}
\end{figure}

We next  increase the dimension  to $d$ $=$ $8$, and report the results in the following figure. The accuracy is not so good as in the previous section with simple structure of the solution, but we notice that the MDBDP scheme yields the best performance (above dimension $d$ $=$ $10$, all the schemes do not give good approximation results).

\begin{figure}[H]
	\centering
	\begin{tabular}{|c|c|c|c|}
		\hline
		& Averaged value & Standard deviation & Relative error (\%)\\
		\hline
		[HPW20] (DBDP1) & \textbf{1.1694} & 0.0254 & \textbf{0.78}\\  
		\hline
		[HPW20] (DBDP2) & 1.0758 & \textbf{0.0078} & 7.28\\
		\hline
		[HJE17] (DBSDE) & NC & NC&  NC\\
		\hline
		[Bec+19] (DS)  & 1.2283 & \textbf{0.0113} & 5.86\\
		\hline 
		MDBDP  &  \textbf{1.1654} & 0.0379 & \textbf{0.47}\\
		\hline 
	\end{tabular}
	\caption{Estimate of $u(0, x_0 )$ in the case \eqref{PDE semi UNBOUNDED}, where $d = 8, x_0 = 0.5\ \mathds{1}_8$, $T$ $=$ $1$ with 120 time steps. Average and standard deviation
		observed over 10 independent runs are reported. The theoretical solution is 1.1603.}
	\label{fig: table results d=8}
\end{figure}

\printbibliography

\end{document}